\documentclass{amsart}
\usepackage{amsmath,amsthm,amssymb,amsfonts,graphicx,color}

\newtheorem*{thm1}{Theorem 4.12}
\newtheorem*{thm2}{Theorem 4.16}

\newtheorem{theorem}{Theorem}[section]
\newtheorem{lemma}[theorem]{Lemma}

\newtheorem{proposition}[theorem]{Proposition}
\newtheorem{definition}[theorem]{Definition}

\newtheorem{corollary}[theorem]{Corollary}

\theoremstyle{remark}
\newtheorem{remark}[theorem]{Remark}
\newtheorem{example}[theorem]{Example}
\newtheorem*{ack}{Acknowledgements}

\begin{document}

\title{Betti diagrams from graphs}

\author[A. Engstr\"om]{Alexander Engstr\"om}
\address{Department of Mathematics, Aalto University, Helsinki, Finland}
\email{alexander.engstrom@aalto.fi}

\author[M. T. Stamps]{Matthew T. Stamps}
\address{Department of Mathematics, Aalto University, Helsinki, Finland}
\email{matthew.stamps@aalto.fi}
\date{\today}

\begin{abstract}
The emergence of Boij-S\"oderberg theory has given rise to new connections between combinatorics and commutative algebra.  Herzog, Sharifan, and Varbaro recently showed that every Betti diagram of an ideal
with a $k$-linear minimal resolution arises from that of the Stanley-Reisner ideal of a simplicial complex.  In this paper, we extend their result for the special case of $2$-linear resolutions using purely combinatorial methods.  Specifically, we show bijective correspondences between Betti diagrams of ideals with 2-linear resolutions, threshold graphs, and anti-lecture hall compositions.  Moreover, we prove that any Betti diagram of a module with a $2$-linear resolution is realized by a direct sum of Stanley-Reisner rings associated to threshold graphs.  Our key observation is that these objects are the lattice points in a normal reflexive lattice polytope.
\end{abstract}

\subjclass[2010]{Primary 13D02; Secondary 05C25}

\keywords{Linear resolutions, Boij-S\"oderberg theory, threshold graphs}

\maketitle

\section{Introduction}

A fundamental problem in commutative algebra is to characterize the coarsely graded Betti numbers of the finitely generated graded modules over a fixed polynomial ring.  Originating with Hilbert in the 1890's, this task largely eluded mathematicians until 2006, when Boij and S\"oderberg introduced the following relaxation:  Instead of trying to determine whether or not a table of nonnegative integers is the Betti diagram of a module, one should try to determine if some rational scalar of the table is the Betti diagram of a module.  This shifted the viewpoint to studying rays in a rational cone and with this new geometric picture, the subject has seen a great deal of progress over the last six years.  In particular, the idea led Boij and S\"oderberg to conjecture that every Betti diagram of a module can be decomposed in a specific and predictable way \cite{BS1}.  Eisenbud and Schreyer proved this for Cohen-Macaulay modules \cite{ES} and Boij and S\"oderberg later extended that proof to the general setting \cite{BS2}.

A natural question that arises from Boij-S\"oderberg theory is the following:  If a module is constructed from a combinatorial object, e.g., the edge ideal of a graph or the Stanley-Reisner ideal of a simplicial complex, can any of the combinatorial properties of that object be seen in the Boij-S\"oderberg decomposition of the module?  Herzog, Sharifan, and Varbaro recently gave an elegant partial answer to this question \cite{HSV} for the special case of ideals with $k$-linear resolutions by showing that every Betti diagram of an ideal with a $k$-linear minimal resolution can be realized by the Stanley-Reisner ideal of a certain simplicial complex.  More specifically, they prove that from the coefficients of a Boij-S\"oderberg decomposition of a $k$-linear Betti diagram, one obtains an $O$-sequence which, by a famous result of Eagon and Reiner along with Macaulay's theorem, yields a simplicial complex with the desired properties.  Nagel and Sturgeon employ a similar approach to show that the $k$-linear Betti diagrams can be realized with hyperedge ideals of $k$-uniform Ferrers hypergraphs \cite{NS}. 

In this paper, we restrict our attention to the case of $2$-linear resolutions and give an alternate characterization of the Betti diagrams of ideals with $2$-linear minimal resolutions using purely combinatorial means.  We show that every Betti diagram from an ideal with a $2$-linear resolution is realized by a Stanley-Reisner ring constructed from a threshold graph and that this correspondence is a bijection. 

\begin{thm1}
For every $2$-linear ideal $I$ in $S$, there is a unique threshold graph $T$ on $n+1$ vertices with $\beta(S/I) = \beta(\mathbf{k}[T])$.
\end{thm1}

Moreover, for any such ideal, we give an efficient algorithm for constructing its corresponding threshold graph that avoids expensive computations like Hochster's formula; rather, we can generate all such Betti diagrams recursively with affine transformations, avoiding operators such as Ext and Tor.  Even more interesting, we find that these diagrams are the lattice points of a normal reflexive lattice simplex that is combinatorially equivalent to a simplex of anti-lecture hall compositions and, from this geometric picture, we prove that any Betti diagram of a \emph{module} with a $2$-linear resolution arises from a direct sum of Stanley-Reisner rings constructed from threshold graphs.

\begin{thm2}
For every graded $S$-module $M$ with $2$-linear minimal free resolution and $\beta_{0,0}(M) = m$, there exists a collection of $m$ threshold graphs $\{T_1,\dots,T_m\}$, not necessarily distinct, such that $\beta(M) =  \beta(\mathbf{k}[T_1] \oplus \cdots \oplus \mathbf{k}[T_m])$. 
\end{thm2}

The paper is organized as follows:  In Section 2, we give a quick review of the necessary concepts from commutative algebra and Boij-S\"oderberg theory.  In Section 3, we interpret the main theorem of Boij-S\"oderberg theory in terms of linear algebra for the special case of modules with $k$-linear minimal resolutions.  We prove our main theorems in Section 4 and conclude with some interesting connections to discrete geometry in Section 5.

\section{Preliminaries}

We begin with a review of the basic definitions and theorems from Boij-S\"oderberg theory.  For a more detailed introduction to this topic, we recommend \cite{floystad}. 

\subsection{Commutative Algebra}

Let $\mathbf{k}$ be a field and $S = \mathbf{k}[x_1,\dots,x_n]$. 
 For any finitely generated graded $S$-module $M$, let $M_i$ 
 denote its graded piece of degree $i$ and let $M(d)$ denote the  
 \emph{twisting} of $M$ by $d$, i.e. the module where 
 $M(d)_i \cong M_{i+d}$.  
A \emph{minimal graded free resolution} of $M$ is an exact 
complex 
$$ 0 \leftarrow M \leftarrow F_0 \leftarrow  F_1 \leftarrow  \cdots \leftarrow F_l$$
where each $F_i$ is a graded free $S$-module of the form
 $\oplus_{j \in \mathbb{Z}} S(-j)^{\beta_{i,j}}$ such that the number 
 of basis elements is minimal and each map is graded.  

The value $\beta_{i,j}$ is called the $i$th \emph{graded Betti number} of 
degree $j$.  These numbers are a refinement of the ordinary Betti numbers $\beta_i = \sum_j \beta_{i,j}$ and are independent of the choice of resolution of $M$, thus yielding an important numerical invariant of $M$.  We often express the graded Betti numbers in a two-dimensional array called the \emph{Betti diagram} of $M$, denoted by $\beta(M)$.  Since $\beta_{i,j} = 0$ 
whenever $i > j$, it is customary to write $\beta(M)$ such that 
$\beta_{i,j}$ is in position $(j-i,i)$.  That is, 
\[
\beta(M) =
\left[
\begin{array}{cccc} 
\beta_{0,0} & \beta_{1,1} &  \cdots & \beta_{l,l} \\ 
\beta_{0,1} & \beta_{1,2} &  \cdots & \beta_{l,l+1} \\ 
\vdots & \vdots & \ddots & \vdots \\ 
\beta_{0,r} & \beta_{1,r+1} &  \cdots & \beta_{l,l+r}  
\end{array}
\right].
\]
A Betti diagram is called \emph{pure} if every 
column has at most one nonzero entry, i.e. for each 
$i \in \{0,\dots,l\}$, $\beta_{i,j} \neq 0$ for at most one $j \in \mathbb{Z}$.  

\subsection{Boij-S\"oderberg theory}

Let $\mathbb{Z}_{deg}^{n+1}$ denote the set of strictly increasing nonnegative 
integer sequences $\mathbf{d} = (d_0,\dots,d_s)$ with $s \leq n$, called 
\emph{degree sequences}, along with the partial 
order given by $$(d_0,\dots,d_s) \geq (e_0,\dots,e_t)$$ whenever 
$s \leq t$ and $d_i \geq e_i$ for all $i \in \{0,\dots,s\}$.  To every $\mathbf{d} \in 
\mathbb{Z}_{deg}^{n+1}$, we associate a pure Betti diagram $\pi(\mathbf{d})$ 
with entries defined as follows:
\[
\pi_{i,j}(\mathbf{d})=\left\{
\begin{array}{ll}
\displaystyle \prod_{k\neq 0,i} 
\left|
\frac{d_k-d_0}{d_k-d_i}
\right| 
& i \geq 0, j=d_i, \\
0 & \textrm{otherwise.}
\end{array}
\right.
\]
The main theorem of Boij-S\"oderberg theory states that the Betti diagram of any 
graded $S$-module can be written as a positive rational combination of $\pi
(\mathbf{d})$'s.  It was originally conjectured by Boij and S\"oderberg \cite{BS1}, proven for Cohen-Macaulay modules Eisenbud and Schreyer \cite{ES}, and generalized to the form below by Boij and S\"oderberg \cite{BS2}.

\begin{theorem}[Boij-S\"oderberg] \label{thm:boijSoderberg}
For every graded $S$-module, $M$, there exists a vector $c \in \mathbb{Q}_{\geq 0}^p$ 
and a chain of degree sequences $\mathbf{d}^1 < \mathbf{d}^2 < \cdots < \mathbf{d}^p$ in 
$\mathbb{Z}_{deg}^{n+1}$ such that 
$$\beta(M) = c_1 \pi(\mathbf{d}^1) + \cdots + c_p \pi (\mathbf{d}^p).$$
\end{theorem}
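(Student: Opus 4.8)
The plan is to follow the two-stage strategy behind this theorem: first establish the statement for Cohen--Macaulay modules, then remove that hypothesis. Throughout one works inside the finite-dimensional $\mathbb{Q}$-vector space of ``formal Betti diagrams'' whose entries are supported in a fixed bounded range of homological and internal degrees. The Betti diagrams of actual graded $S$-modules span a rational polyhedral cone, call it $B$, and the assertion is exactly that $B$ is the cone generated by the pure diagrams $\pi(\mathbf{d})$, together with the combinatorial refinement that the pure rays one may use to write a given $\beta(M)$ can be chosen to form a chain in $\mathbb{Z}_{deg}^{n+1}$.

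For the Cohen--Macaulay case I would prove the two inclusions between $B$ and the pure cone separately. The inclusion ``pure cone $\subseteq B$'' rests on an existence result: for every degree sequence $\mathbf{d}$ there is a Cohen--Macaulay $S$-module whose minimal free resolution is pure of type $\mathbf{d}$. In characteristic zero this is the equivariant construction of Eisenbud--Fl\o ystad--Weyman; in arbitrary characteristic one uses the characteristic-free construction (via symmetric/tensor-power and pushforward constructions) producing a module with the prescribed pure resolution, after which the Herzog--K\"uhl equations $\sum_i (-1)^i \beta_{i,d_i} d_i^{\,k} = 0$ for $k < s$ force its Betti diagram to be a positive rational multiple of $\pi(\mathbf{d})$. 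For the reverse inclusion $B \subseteq$ pure cone I would produce enough supporting hyperplanes for $B$: there is a bilinear pairing between Betti diagrams of graded $S$-modules and cohomology tables of coherent sheaves on $\mathbb{P}^{n-1}$, and a spectral-sequence argument (equivalently, a Tate-resolution / BGG argument) shows this pairing is always nonnegative. Feeding in the cohomology tables of suitable vector bundles yields linear functionals that are nonnegative on all of $B$ and that together cut out precisely the cone over the $\pi(\mathbf{d})$; comparing with the first inclusion gives $B = $ pure cone, and analysing which collections of pure rays can lie on a common face shows that every face of $B$ is the cone over a chain of degree sequences.

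Given the cone description, the decomposition in the statement follows from a greedy ``peeling'' algorithm. For $0 \neq \beta = \beta(M)$, minimality of the resolution forces the left-hand degrees $\underline{d}_i := \min\{\,j : \beta_{i,j} \neq 0\,\}$ to be strictly increasing, so $\underline{\mathbf{d}} := (\underline{d}_0,\underline{d}_1,\dots)$ is a degree sequence; one sets $c_1 := \max\{\,t \geq 0 : \beta - t\,\pi(\underline{\mathbf{d}}) \in B\,\}$. This maximum is finite and positive: it is positive because $\pi(\underline{\mathbf{d}})$ lies on every facet of $B$ through $\beta$, and finite because $\pi(\underline{\mathbf{d}}) \neq 0$ while $B$ has trivial lineality space. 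One then checks that $\beta - c_1\,\pi(\underline{\mathbf{d}})$ again lies in $B$ and has strictly larger left-hand degree sequence. Iterating, and using that the degree sequences supported in the fixed box form a finite poset, the process terminates and produces $\beta = c_1 \pi(\mathbf{d}^1) + \cdots + c_p \pi(\mathbf{d}^p)$ with $\mathbf{d}^1 < \cdots < \mathbf{d}^p$ and all $c_k \in \mathbb{Q}_{>0}$, as required.

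Finally, to drop the Cohen--Macaulay hypothesis I would show that the cone of Betti diagrams of \emph{all} graded $S$-modules coincides with the Cohen--Macaulay cone, so that the decomposition just established applies verbatim; concretely one reduces an arbitrary module to Cohen--Macaulay ones by a truncation and general-hyperplane-section argument (the content added by the second Boij--S\"oderberg paper). The main obstacle throughout is the reverse inclusion $B \subseteq$ pure cone: establishing nonnegativity of the Betti--cohomology pairing and then extracting from it exactly the right facet inequalities, together with the existence of pure resolutions in positive characteristic, is where essentially all the difficulty is concentrated; the peeling algorithm and the reduction to the Cohen--Macaulay case are comparatively formal once those inputs are in hand.
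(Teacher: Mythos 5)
This theorem is quoted in the paper as a known foundational result --- it is attributed to \cite{BS1}, \cite{ES}, and \cite{BS2} and no proof is given --- so there is no internal argument to compare yours against. Your sketch is a faithful outline of the published proofs: the two-inclusion cone argument for the Cohen--Macaulay case (existence of pure resolutions via Eisenbud--Fl\o ystad--Weyman in characteristic zero and the characteristic-free construction of Eisenbud--Schreyer, combined with the Herzog--K\"uhl equations, for one inclusion; the nonnegative pairing between Betti diagrams and cohomology tables of sheaves on projective space for the other), the greedy peeling algorithm that produces a chain of degree sequences, and the extension to arbitrary modules from \cite{BS2}. You are right that essentially all of the difficulty is concentrated in the existence of pure resolutions and in the positivity of the pairing; your write-up defers both to the literature rather than proving them, which is the appropriate level of detail here given that the paper itself treats the entire theorem as a black box. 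One cosmetic point worth noting: the partial order on $\mathbb{Z}_{deg}^{n+1}$ used in the paper makes \emph{shorter} degree sequences larger, which is exactly what allows the chain in the non-Cohen--Macaulay case to mix pure diagrams of different lengths (codimensions); your final reduction step should be read with that convention in mind.
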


The combination in Theorem \ref{thm:boijSoderberg} is called a \emph{Boij-S\"oderberg decomposition} of $M$ and the entries of $c$ are called \emph{Boij-S\"oderberg coefficients}.  This decomposition is not unique in general, but there is a simple algorithm for computing a set of coefficients that satisfy the theorem, see \cite{floystad}. 

\section{Betti diagrams of 2-linear resolutions}

An ideal $I$ in $S$ is called \emph{$k$-linear} if $\beta_{i,j}(I) = 0$ whenever $j-i \neq k-1$.  If $I$ is $2$-linear, then the Betti diagram of $M = S/I$ looks like \[ 
 \beta(M) =
 \left[
 \begin{array}{cccccc}
1 & \cdot & \cdot & \cdot & \cdots & \cdot \\
\cdot & \beta_1 &  \beta_2 & \beta_3 &  \cdots & \beta_s \\
\end{array}
\right]
\]
for some $s \leq n$.  Our aim is to translate the statement of Theorem \ref{thm:boijSoderberg}, for $S$-modules with $2$-linear resolutions, into linear algebraic terms.  For this, it will be convenient to consider the \emph{reduced Betti vector} $\omega(M) = [\beta_1,\dots,\beta_s]$ in place of $\beta(M)$.  

If $M$ is a $2$-linear $S$-module, then every $\mathbf{d}^l$ in Theorem \ref{thm:boijSoderberg} is of the form $(0,2,\dots,l+1)$.  So, let $\pi^l = \pi(\mathbf{d}^l)$, $\omega^l$ be the reduced Betti vector corresponding to $\pi^l$, and $\Omega$ be the lower-diagonal $n \times n$ matrix whose $l^{\rm th}$ row is $\omega^l$.   We leave it to the reader to verify the following:

\begin{lemma}
The matrix $\Omega$ is invertible and has $ij$--entry $\omega_{j}^i = j{i+1 \choose j+1}$.  Moreover, the $ij$--entry of $\Omega^{-1}$ is $(-1)^{i-j}\frac{1}{i}{i+1 \choose j+1}.$
\end{lemma}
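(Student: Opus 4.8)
The plan is to compute the entries $\omega_j^i$ directly from the definition of $\pi(\mathbf{d})$ with $\mathbf{d} = \mathbf{d}^i = (0,2,3,\dots,i+1)$, and then verify the claimed inverse by a binomial identity. First I would specialize the formula for $\pi_{k,j}(\mathbf{d})$ to this degree sequence: here $d_0 = 0$ and $d_k = k+1$ for $k \geq 1$, so for the row index $l = k$ the only nonzero entry sits in homological degree $k$ at internal degree $d_k = k+1$, and its value is $\prod_{t \neq 0,k}\left|\frac{d_t - d_0}{d_t - d_k}\right| = \prod_{t \neq 0,k}\left|\frac{t+1}{t-k}\right|$, where $t$ ranges over $1,\dots,i$. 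The reduced Betti vector $\omega^i$ records these numbers for $k = 1,\dots,i$ (and zeros beyond), so $\omega^i_j = \prod_{\substack{1 \leq t \leq i \\ t \neq j}}\left|\frac{t+1}{t-j}\right|$. Splitting the product over $t < j$ and $t > j$ to resolve the absolute values, the numerator telescopes to $(i+1)!/2$ divided by the $j+1$ factor that is omitted, and the denominator becomes $(j-1)!\,(i-j)!$; collecting terms gives $\omega^i_j = j\binom{i+1}{j+1}$, which is the asserted formula. Since $\omega^i_j = 0$ for $j > i$ and $\omega^i_i = i \neq 0$, the matrix $\Omega$ is lower triangular with nonzero diagonal, hence invertible.

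Next I would verify that the matrix $A$ with entries $A_{ij} = (-1)^{i-j}\tfrac{1}{i}\binom{i+1}{j+1}$ is the inverse of $\Omega$. It suffices to check $\sum_{j} \omega^i_j A_{jk} = \delta_{ik}$, i.e.
\[
\sum_{j=k}^{i} j\binom{i+1}{j+1} \cdot (-1)^{j-k}\frac{1}{j}\binom{j+1}{k+1} = \delta_{ik}.
\]
The $j$'s cancel, and using the subset-of-a-subset identity $\binom{i+1}{j+1}\binom{j+1}{k+1} = \binom{i+1}{k+1}\binom{i-k}{j-k}$, the sum becomes $\binom{i+1}{k+1}\sum_{j=k}^{i}(-1)^{j-k}\binom{i-k}{j-k} = \binom{i+1}{k+1}\sum_{m=0}^{i-k}(-1)^m\binom{i-k}{m}$, which is $\binom{i+1}{k+1}(1-1)^{i-k}$ by the binomial theorem. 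This equals $0$ unless $i = k$, in which case it equals $1$, as required.

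The computation is entirely routine, so there is no serious obstacle; the only place demanding a little care is the handling of the absolute values in $\prod_{t \neq 0,k}\left|\frac{t+1}{t-k}\right|$, since $t - k$ is negative for $t < k$, and keeping the telescoping bookkeeping straight when the $(j+1)$-st numerator factor is the one that drops out. I would present this as a short verification and indeed leave the bulk of it to the reader as the paper suggests.
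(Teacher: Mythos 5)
The paper offers no proof of this lemma (it is explicitly left to the reader), and your direct computation --- specializing the pure-diagram formula to $\mathbf{d}^i=(0,2,3,\dots,i+1)$ to get $\omega^i_j=\prod_{t\neq j}\left|\frac{t+1}{t-j}\right|=j\binom{i+1}{j+1}$, noting lower-triangularity with nonzero diagonal, and checking the inverse via the subset-of-a-subset identity together with the alternating binomial sum $(1-1)^{i-k}$ --- is correct and is the natural argument. The only blemish is the intermediate claim that the numerator ``telescopes to $(i+1)!/2$ divided by the $j+1$ factor that is omitted'': since $\prod_{t=1}^{i}(t+1)=2\cdot 3\cdots(i+1)=(i+1)!$, the numerator after omitting the factor $j+1$ is $(i+1)!/(j+1)$ with no extra factor of $2$, and with that correction the bookkeeping lands on $\frac{(i+1)!/(j+1)}{(j-1)!\,(i-j)!}=j\binom{i+1}{j+1}$ exactly as you state.
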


Since any subset of row vectors in $\Omega$ forms a chain in $\mathbb{Z}_{deg}^{n+1}$, we can replace the vector $c \in \mathbb{Q}_{> 0}^p$ in Theorem \ref{thm:boijSoderberg} with a vector $c \in \mathbb{Q}_{\geq 0}^{n}$ such that $\sum_i c_i = \beta_{0,0}(M)$.   

\begin{theorem}\label{thm:BS} For every $2$-linear (graded) $S$-module $M$ with $\beta_{0,0}(M) = m$, $$\beta(M) = c_1 \pi^1 + \cdots + c_n \pi^n$$ where $c = \omega(M)\Omega^{-1} \in \mathbb{Q}_{\geq 0}^n$ and $\sum\limits_{i} c_i = m$. \end{theorem}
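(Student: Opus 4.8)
The strategy is to start from a Boij--S\"oderberg decomposition of $M$ as provided by Theorem \ref{thm:boijSoderberg}, observe that $2$-linearity forces every pure summand to be one of the diagrams $\pi^k$, and then recover the coefficient vector by inverting the matrix $\Omega$. For the first point: since $M$ is $2$-linear, $\beta(M)$ is supported only in position $(0,0)$ and in positions $(1,i)$ with $i \geq 1$. The diagrams $\pi(\mathbf{d}^l)$ appearing in a decomposition $\beta(M) = \sum_l c_l \pi(\mathbf{d}^l)$ are nonnegative and the coefficients $c_l$ are positive, so no cancellation can occur and each $\pi(\mathbf{d}^l)$ must itself be supported in those same positions. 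For a degree sequence $\mathbf{d} = (d_0,\dots,d_s)$, the nonzero entries of $\pi(\mathbf{d})$ sit in positions $(d_i - i,\, i)$, and $d_i - i$ is non-decreasing in $i$ because $\mathbf{d}$ is strictly increasing. Forcing $(d_i - i,\, i)$ to avoid $(1,0)$ and to avoid $(0,i)$ for $i \geq 1$ pins down $d_0 = 0$ and $d_i = i+1$ for $i \geq 1$, i.e. $\mathbf{d} = (0,2,3,\dots,s+1)$. Hence each summand is some $\pi^k$ with $1 \leq k \leq n$, which is exactly the fact stated just before the lemma.

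Next, since the sequences $(0,2,\dots,k+1)$ for $k = 1,\dots,n$ form a chain in $\mathbb{Z}_{deg}^{n+1}$, I would rewrite the decomposition as $\beta(M) = \sum_{k=1}^{n} c_k \pi^k$, setting $c_k = 0$ for the indices not already present; this produces a vector $c \in \mathbb{Q}_{\geq 0}^n$, the nonnegativity being inherited directly from Theorem \ref{thm:boijSoderberg}. Evaluating the defining formula for $\pi_{i,j}$ at $i = 0$ gives $\pi^k_{0,0} = 1$ for every $k$, so comparing $(0,0)$-entries yields $\sum_k c_k = \beta_{0,0}(M) = m$. To pin down $c$ itself, apply the reduced Betti vector operator to $\beta(M) = \sum_k c_k \pi^k$: this gives $\omega(M) = \sum_k c_k \omega^k$, which is precisely the row-vector product $c\,\Omega$, since the $k$th row of $\Omega$ is $\omega^k$ (with $\omega(M)$ and each $\omega^k$ padded by zeros out to length $n$). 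By the lemma, $\Omega$ is invertible, so $c = \omega(M)\,\Omega^{-1}$; this in particular shows that $c$ is the unique coefficient vector expressing $\beta(M)$ over the $\pi^k$, so the Boij--S\"oderberg decomposition of a $2$-linear module is unique in this basis.

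The only step that requires genuine argument rather than formal manipulation is the first one --- that $2$-linearity confines every pure summand to the linear family $\{\pi^k\}$ --- and even that rests on the elementary ``no cancellation among nonnegative diagrams with positive coefficients'' observation together with the monotonicity of $d_i - i$ and the resulting short case check. Everything after that is linear algebra resting on the already-granted invertibility of $\Omega$, plus the routine bookkeeping of extending $c$ and $\omega(M)$ to length $n$ with zeros; I do not anticipate any real obstacle there.
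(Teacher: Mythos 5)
Your proof is correct and takes essentially the same route as the paper, which offers no formal proof of this theorem but asserts in the two preceding sentences exactly your key steps: that $2$-linearity forces every degree sequence in the Boij--S\"oderberg decomposition to be $(0,2,\dots,l+1)$, and that these form a chain so the coefficient vector can be padded to length $n$ with $\sum_i c_i = \beta_{0,0}(M)$. Your writeup simply supplies the details the paper leaves to the reader (the no-cancellation argument pinning down the degree sequences, the $(0,0)$-entry comparison, and the inversion of $\Omega$), all of which check out.
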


\begin{remark}\label{rem:lattice} When $\beta_{0,0}(M) = 1$, Theorem \ref{thm:BS} asserts that $\omega(M)$ is a lattice point in the $(n-1)$-dimensional simplex spanned by row vectors of $\Omega$. \end{remark}

We conclude this section with some classic examples of $2$-linear ideals that arise from graph theory.  A \emph{graph} $G$ consists of a finite set $V(G)$, called the \emph{vertex set}, and a subset $E(G)$ of $\binom{V(G)}{2}$, called the \emph{edge set}.   To simplify notation, we write $uv$ instead of $\{u,v\}$ for each edge in $G$.  For any subset of vertices $W \subset V(G)$, the \emph{induced subgraph} $G[W]$ is the graph with vertex set $W$ and edge set $E(G) \cap \binom{W}{2}$.  If $W = V(G) \setminus S$ for some $S \subseteq V(G)$, we write $G \setminus S$ instead of $G[W]$.  A subgraph $C$ of the form $V(C) = \{v_1, \dots, v_l\}$ and $E(C) = \{ v_iv_{i+1} \ | \ 1 \leq i < l \} \cup \{v_1v_l\}$ is called a \emph{cycle} of length $l$.  We say $G$ is  \emph{chordal} if it has no induced cycles of length greater than three or, equivalently, if $E(C) \subsetneq E(G[C])$ for every cycle of length greater than three.  The elements of $E(G[C]) \setminus E(C)$ are called \emph{chords}.  Chordal graphs have many interesting properties that are actively studied in graph theory.  For a thorough introduction to graph theory, we recommend Diestel \cite{diestel2012}.

Given a graph $G$ with vertex set $[n+1] = \{1,\dots,n+1\}$, where $n$ is the number of indeterminants in $S$, let $R = \mathbf{k}[x_1,\dots,x_{n+1}]$,  let $I^c(G) = \langle x_ix_j \mid ij \notin E(G) \rangle \subseteq R$ be the ideal generated by the monomials corresponding to nonedges in $G$, and let $\mathbf{k}[G]$ be the quotient $R / I^c(G)$.  The knowledgeable reader may observe that $I^c(G)$ is the edge ideal of the complement of $G$ and $\mathbf{k}[G]$ is the Stanley-Reisner ring of the clique complex of $G$.  The following theorem was first proved by Fr\"oberg \cite{froberg90} and then by Dochtermann and Engstr\"om \cite{DE}, using topological combinatorics.

\begin{theorem} \label{thm:froberg}
A graph $G$ is chordal if and only if $I^c(G)$ is $2$-linear.  Whenever this is the case, $$\beta_{i,j}(\mathbf{k}[G]) =
\sum_{W \in {V(G) \choose j}} (-1 + \# 
\textrm{components of $G[W]$})$$ for $i=j-1 \geq 1$.
\end{theorem}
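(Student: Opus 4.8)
The plan is to deduce both assertions from Hochster's formula applied to the clique complex $\Delta = \Delta(G)$ of $G$, for which $\mathbf{k}[G] = \mathbf{k}[\Delta]$ is the Stanley--Reisner ring. Hochster's formula reads
\[
\beta_{i,j}(\mathbf{k}[G]) \;=\; \sum_{W \in \binom{V(G)}{j}} \dim_{\mathbf{k}} \widetilde{H}_{j-i-1}\bigl(\Delta[W];\mathbf{k}\bigr),
\]
and the key elementary observation is that the induced subcomplex $\Delta[W]$ is exactly the clique complex of the induced subgraph $G[W]$. I will also use that $I^c(G)$ is $2$-linear precisely when $\beta_{i,j}(\mathbf{k}[G]) = 0$ for every $i \geq 1$ with $j \neq i+1$, matching the Betti diagram displayed at the start of Section~3.

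The Betti-number formula is then immediate: setting $i = j - 1 \geq 1$, the homological index in Hochster's formula becomes $0$, and $\dim_{\mathbf{k}} \widetilde{H}_0(\Delta[W])$ is one less than the number of connected components of $\Delta[W]$, which equals the number of components of $G[W]$ since a clique complex and its underlying graph have the same components. This gives the claimed expression. (The identity holds for every graph; it describes $\beta(\mathbf{k}[G])$ in its entirety only once we also know that all off-diagonal Betti numbers vanish, i.e., once $2$-linearity is established.) For the implication ``$I^c(G)$ $2$-linear $\Rightarrow$ $G$ chordal'' I would argue contrapositively. If $G$ contains an induced cycle $C$ of length $\ell \geq 4$, then $G[V(C)] = C$ has no triangles, so its clique complex is $C$ itself, which is homeomorphic to a circle and has $\widetilde{H}_1 \cong \mathbf{k}$. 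Since every summand in Hochster's formula is nonnegative, the term $W = V(C)$ alone forces $\beta_{\ell-2,\ell}(\mathbf{k}[G]) \geq 1$; as $\ell - (\ell-2) = 2 \neq 1$, this is an off-diagonal entry and $I^c(G)$ is not $2$-linear.

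For the converse, ``$G$ chordal $\Rightarrow$ $I^c(G)$ $2$-linear,'' Hochster's formula reduces the task to showing that $\widetilde{H}_m(\Delta[W];\mathbf{k}) = 0$ for every $m \geq 1$ and every $W \subseteq V(G)$; since induced subgraphs of chordal graphs are chordal, it suffices to prove that the clique complex of an arbitrary chordal graph has trivial reduced homology in positive degrees. I would do this by induction on the number of vertices. A chordal graph $G$ has a simplicial vertex $v$ (Dirac), so the closed neighborhood $N[v]$ is a clique; hence in $\Gamma = \Delta(G)$ the closed star $\operatorname{star}_\Gamma(v)$ is a full simplex on $N[v]$, the link $\operatorname{link}_\Gamma(v)$ is a full simplex on $N(v)$, and $\Gamma = \operatorname{star}_\Gamma(v) \cup \Delta(G \setminus v)$ with $\operatorname{star}_\Gamma(v) \cap \Delta(G \setminus v) = \operatorname{link}_\Gamma(v)$. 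Feeding this into the reduced Mayer--Vietoris sequence and using that a simplex has vanishing reduced homology yields $\widetilde{H}_m(\Gamma) \cong \widetilde{H}_m(\Delta(G \setminus v))$ for all $m \geq 1$, and the induction closes.

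The step I expect to carry the genuine content, and hence to be the main obstacle, is this homology-vanishing statement for clique complexes of chordal graphs; everything else is bookkeeping around Hochster's formula. One must also take minor care to confirm that no cancellation can occur in Hochster's sums (it cannot, since dimensions of homology groups are nonnegative) and to treat the degenerate cases --- isolated vertices, the empty face, and the index $\widetilde{H}_{-1}$ --- when pinning down exactly which $\beta_{i,j}(\mathbf{k}[G])$ vanish.
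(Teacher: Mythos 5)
Your argument is correct. Note first that the paper does not prove Theorem \ref{thm:froberg} at all --- it is quoted as a known result of Fr\"oberg, reproved by Dochtermann and Engstr\"om --- so there is no in-paper proof to compare against; what you have written is essentially the standard modern proof. The two halves of your argument are sound: Hochster's formula with $i=j-1$ immediately yields the component-counting expression (valid for any graph, as you observe, since the induced subcomplex of a clique complex is the clique complex of the induced subgraph); an induced $\ell$-cycle, $\ell\ge 4$, contributes $\widetilde{H}_1\ne 0$ and hence a nonzero $\beta_{\ell-2,\ell}$ off the linear strand; and for the converse, Dirac's simplicial-vertex theorem gives the decomposition $\Delta(G)=\operatorname{star}(v)\cup\Delta(G\setminus v)$ with intersection a (possibly empty) simplex, so Mayer--Vietoris kills all $\widetilde{H}_m$, $m\ge 1$, by induction. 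You correctly flag the only delicate points: the $\widetilde{H}_{-1}$ terms (which vanish for $j\ge 1$ because every vertex of $G$ is a vertex of the clique complex, so only $\beta_{0,0}=1$ survives on the diagonal) and the isolated-vertex case of the link (harmless since only $\widetilde{H}_{m-1}(\operatorname{link})$ with $m-1\ge 0$ enters). For context, Fr\"oberg's original proof was more algebraic, and Dochtermann--Engstr\"om work with independence complexes of the complement graph and tools like folding; your Hochster-plus-Mayer--Vietoris route is the most direct and is perfectly adequate here.
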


\begin{example}\label{ex:iso}
If $G$ consists of $n+1$ isolated vertices, then the ${n+1 \choose i+1}$ induced subgraphs of $G$ with $i+1$ 
vertices each have $i+1$ connected components. Thus, $\beta_{i,i+1}(\mathbf{k}[G])=i{n+1 \choose i+1}$ for each 
$i \geq 1$.
\end{example}

\begin{example}\label{ex:eta}
If $G$ consists of a complete graph on $n$ vertices plus an isolated vertex, $v$, then the $\binom{n}{i}$ 
induced subgraphs of $G$ with $i+1$ vertices that contain $v$ each have two connected components and the remaining induced subgraphs of $G$ (with $i+1$ vertices) are connected.  Thus, $\beta_{i,i+1}(\mathbf{k}[G])={n \choose i}$ for each $i \geq 1$. 
\end{example}

\begin{remark}
If we apply Theorems \ref{thm:BS} and \ref{thm:froberg} to $\mathbf{k}[G]$ for some chordal graph $G$, we get a formula that takes the number of connected components of induced subgraphs of $G$ as input and yields a vector $c \in \mathbb{Q}_{\geq 0}^n$, namely $\omega((\mathbf{k}[G])\Omega^{-1}$, whose entries sum to 1.  It is natural to ask what this formula says if $G$ is not chordal? If the entries of $c$ fail to be nonnegative or sum to 1, then we get a certificate that $G$ is not chordal. Since measuring how far a graph is from being chordal is nontrivial from the viewpoint of complexity, one is inclined to ask if this procedure characterizes chordal graphs.  

Alas, this turns out to not be the case -- there are nonchordal graphs that yield admissible $c$'s -- but these  \emph{false chordal graphs} seem to be few.  Examples of false chordal graphs on six and seven vertices are illustrated in Figure \ref{fake-chordal}.  All other false chordal graphs on seven vertices arise from expanding a (possibly empty) clique of the six vertex graph or coning over all of the six vertex graph.  Computer generated statistics on the size of each class of graphs for a given number of vertices are provided in Table~\ref{fake-table}. 

\end{remark}

\begin{figure} [ht]
\begin{center}\includegraphics[width=0.7\textwidth]{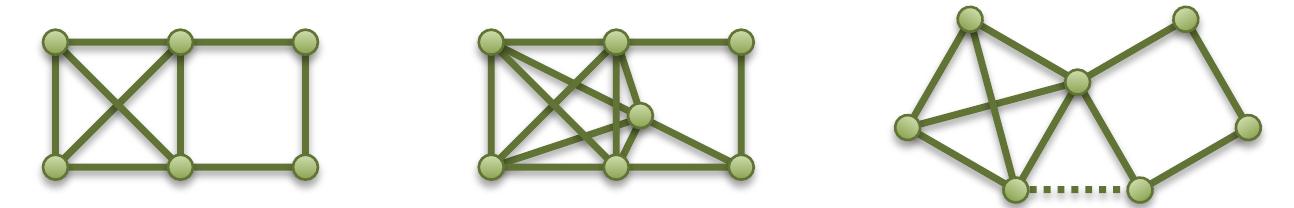} \end{center}
\caption{The single false chordal graph on six vertices along with two examples on seven vertices.  }\label{fake-chordal}
 \end{figure}

\begin{table}[ht]
\centering
\begin{tabular}{r|ccccccc}
 & 1 & 2 & 3 & 4 & 5 & 6 & 7 \\
\hline
Chordal & 1 & 2 & 4 & 10 & 27 & 94 & 393  \\
False chordal & 0 & 0 & 0 & 0 & 0 & 1 & 15  \\ 
Not chordal & 0 & 0 & 0 & 1 & 7 & 62 & 651  \\
\end{tabular}
\caption{Classes of graphs on different numbers of vertices.}
\label{fake-table}
\end{table}

\section{Betti diagrams from graphs}

In this section, we study the Betti diagrams corresponding to a special class of chordal graphs called threshold graphs.  
We show that threshold graphs on a fixed vertex set have distinct Betti diagrams, that every Betti diagram of a chordal graph is that of a threshold graph on the same number of vertices, that every Betti diagram of an $S$-algebra with a $2$-linear resolution is that of a threshold graph on $n+1$ vertices, and that every Betti diagram of an $S$-module with a $2$-linear resolution is that of a direct sum of Stanley-Reisner rings constructed from threshold graphs on $n+1$ vertices, where $n$ is the number of in determinants in $S$.

\subsection{Betti diagrams from threshold graphs}

In a graph $G$, two vertices are said to be \emph{adjacent} if they are contained in an edge of $G$.  A vertex adjacent to no others is called \emph{isolated} and a vertex adjacent to all others is called \emph{dominating}.  For every graph $G$ on $n$ vertices, let $G_\ast$ be the graph on $n+1$ vertices obtained by adding an isolated vertex to $G$ and, similarly, let $G^\ast$ be the graph obtained by adding a dominating vertex to $G$.  A graph $G$ is called \emph{threshold} if it can be constructed from a single vertex and a sequence of the operations $-^\ast$ and $-_\ast$.  It is well-known that if $G$ is chordal, then so are $G_{\ast}$ and $G^{\ast}$, and thus, all threshold graphs are chordal.  We refer to Mahadev and Peled  \cite{mahadevPeled1995} for a survey that includes the following lemma:

\begin{lemma}\label{lem:thresholdCount}
There are $2^{n}$ threshold graphs on
$n+1$ vertices.  Moreover, every threshold graph is determined by a unique sequence of $-^\ast$ and $-_\ast$. 
\end{lemma}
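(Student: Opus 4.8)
The plan is to prove both claims simultaneously by establishing that the map from binary strings of length $n$ to threshold graphs on $n+1$ vertices, sending a string to the graph built by applying the operations $-_\ast$ and $-^\ast$ from left to right (with $0$ coding one operation and $1$ the other), is a bijection. Surjectivity is immediate from the definition of threshold graph, since every such graph is by definition obtained from a single vertex by some sequence of $n$ operations of the two types, and this is precisely the data of a length-$n$ binary string. So the real content is injectivity: two distinct construction sequences yield non-isomorphic graphs. Since there are exactly $2^n$ binary strings, injectivity gives the count $2^n$ and the uniqueness of the coding sequence at once.

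The key step is to show that, given a threshold graph $G$ on $n+1 \geq 2$ vertices, one can read off the \emph{last} operation in its construction sequence from the isomorphism type of $G$ alone, and that undoing this operation leaves a well-defined threshold graph on $n$ vertices. I would argue as follows. First observe that any threshold graph on at least two vertices has either an isolated vertex or a dominating vertex: the last operation produces one of these, and neither subsequent nature nor earlier structure can occur since there are no further operations. (More carefully: the vertex added last is isolated or dominating in the final graph, because no later operation touches it.) Conversely, removing an isolated or dominating vertex from a threshold graph again gives a threshold graph — this is the standard fact that $G$ threshold implies $G_\ast$ and $G^\ast$ threshold, run in reverse. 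The one subtlety is the possible ambiguity when $G$ has \emph{both} an isolated and a dominating vertex, or several of one kind; here I would note that $n+1 \geq 3$ forces a graph with both an isolated and a dominating vertex to be disconnected with the dominating vertex in its own component only if — actually, a cleaner route is: the set of isolated vertices and the set of dominating vertices of a threshold graph cannot be simultaneously nonempty once $n+1 \geq 3$, since an isolated vertex and a dominating vertex would have to be adjacent and non-adjacent at once. So for $n+1 \geq 3$ the last operation is unambiguous; the base case $n+1 = 2$ (two graphs, $K_2$ and its complement) is checked directly.

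With that lemma in hand, injectivity follows by induction on $n$: if two binary strings $s, s'$ of length $n$ produce isomorphic graphs $G \cong G'$, then by the previous paragraph the final operations agree, so the last bits of $s$ and $s'$ agree; removing that last vertex gives isomorphic threshold graphs on $n$ vertices produced by the length-$(n-1)$ prefixes of $s$ and $s'$, which are equal by the inductive hypothesis. Hence $s = s'$. The main obstacle I anticipate is exactly the case analysis around vertices that are simultaneously isolated/dominating or repeated — i.e., pinning down precisely when the "undo" step is forced — and making sure the small-$n$ base cases are handled so the induction is clean. Everything else is bookkeeping, and in fact the whole lemma is standard enough (see \cite{mahadevPeled1995}) that a short argument along these lines suffices.
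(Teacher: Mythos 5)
Your argument is correct, and it is worth noting that the paper itself offers no proof of this lemma at all: it is quoted from the Mahadev--Peled survey. Your proposal supplies the standard self-contained argument, and the structure is sound: surjectivity of the string-to-graph map is definitional, and injectivity follows by induction once you show the last operation is recoverable from the isomorphism type. Two small refinements. First, the disjointness you need already holds for all graphs on at least two vertices, not just $n+1\geq 3$: a dominating vertex is adjacent to some other vertex and hence not isolated, and a dominating vertex $v$ and a distinct isolated vertex $u$ would force $uv$ to be both an edge and a non-edge. So the only genuinely exceptional case is the one-vertex graph, where the unique vertex is vacuously both isolated and dominating, and that case never arises in the induction since you start peeling at $n+1\geq 2$. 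Second, to make the ``undo'' step legitimate you should say explicitly that any two isolated vertices of a graph (respectively any two dominating vertices) are swapped by an automorphism, so that deleting the last-added vertex of $G$ and deleting the image under an isomorphism $G\cong G'$ of that vertex yield isomorphic graphs; you gesture at this but it is the one point where the induction could otherwise slip. With those two sentences added, the proof is complete.
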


The Betti diagram of a threshold graph can be constructed recursively
in a similar manner to the graph itself.  As such, we can quickly calculate the Betti diagram of a 
threshold graph without the computations in Theorem \ref{thm:froberg}.  

\begin{proposition}\label{prop:threshold}
If $G$ is a chordal graph on $n$ vertices, then $$(1) \ \ 
\omega(\mathbf{k}[G^*]) = [ \ \omega(\mathbf{k}[G]) \ | \ 0 \ ] \text{ and } \ (2) \ \ \omega(\mathbf{k}[G_*]) = \omega(\mathbf{k}[G])
\Lambda + \eta_{n}$$ where $\Lambda$ is 
the $(n-1) \times n$-matrix whose $(i,j)$ position is $1$ if $i = j$ or $j-1$ and 
$0$ otherwise and $\eta_n$ is the vector whose $i$th entry is $\binom{n}{i}$.
\end{proposition}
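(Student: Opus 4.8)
The plan is to apply Fr\"oberg's formula (Theorem~\ref{thm:froberg}) directly to $G^*$ and to $G_*$, in each case splitting the sum over vertex subsets according to whether or not the subset contains the newly added vertex. Since $G$ is chordal, both $G^*$ and $G_*$ are chordal, so the formula is available for all three graphs. Write $c(H)$ for the number of connected components of a graph $H$ and put $b_j(H) = \sum_{W \in \binom{V(H)}{j}} (c(H[W]) - 1)$, so that by Theorem~\ref{thm:froberg} the $i$th coordinate of $\omega(\mathbf{k}[H])$ is $b_{i+1}(H)$ for $i \ge 1$. I will also use the trivial facts $b_1(H) = 0$ and $b_j(H) = 0$ for $j > |V(H)|$; these absorb the boundary columns of $\Lambda$ and the padding $0$ in part (1).

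For (1), let $v$ be the dominating vertex of $G^*$ and fix $j$ with $2 \le j \le n+1$. Among the $W \in \binom{V(G^*)}{j}$, those avoiding $v$ satisfy $W \subseteq V(G)$ and $G^*[W] = G[W]$, contributing $b_j(G)$ in total, while those containing $v$ induce a connected subgraph (as $v$ is adjacent to everything in $W$) and contribute $0$. Hence $b_j(G^*) = b_j(G)$ for $2 \le j \le n+1$; in particular $b_{n+1}(G^*) = b_{n+1}(G) = 0$. Reading off coordinates gives $\omega(\mathbf{k}[G^*]) = [\,\omega(\mathbf{k}[G]) \mid 0\,]$.

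For (2), let $v$ be the isolated vertex of $G_*$ and fix $j$ with $2 \le j \le n+1$. Subsets avoiding $v$ again contribute $b_j(G)$. If $v \in W$, write $W = \{v\} \cup W'$ with $|W'| = j-1$; then $G_*[W]$ is the disjoint union of $G[W']$ and a single isolated vertex, so $c(G_*[W]) - 1 = c(G[W'])$, and summing over $W'$ gives $\sum_{W' \in \binom{V(G)}{j-1}} c(G[W']) = b_{j-1}(G) + \binom{n}{j-1}$. Therefore $b_j(G_*) = b_j(G) + b_{j-1}(G) + \binom{n}{j-1}$, so the $j$th coordinate of $\omega(\mathbf{k}[G_*])$ equals $b_{j+1}(G_*) = b_{j+1}(G) + b_j(G) + \binom{n}{j}$. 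With the convention that coordinates of $\omega(\mathbf{k}[G])$ outside $\{1,\dots,n-1\}$ vanish, the sum $b_{j+1}(G) + b_j(G)$ is exactly $(\omega(\mathbf{k}[G])\Lambda)_j$, since column $j$ of $\Lambda$ has its two $1$'s in rows $j$ and $j-1$, and $\binom{n}{j}$ is the $j$th entry of $\eta_n$; the facts $b_1(G) = 0$ and $b_{n+1}(G) = 0$ make this work in the columns $j=1$ and $j=n$ too. This gives $\omega(\mathbf{k}[G_*]) = \omega(\mathbf{k}[G])\Lambda + \eta_n$.

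The computation is essentially bookkeeping. The only points needing care are the shift between the index $i$ of a coordinate of $\omega$ and the cardinality $i+1$ of the subsets it counts, and verifying that the first and last columns of $\Lambda$ interact correctly with the extremal terms $b_1$ and $b_{n+1}$ (which vanish); I do not anticipate a genuine obstacle.
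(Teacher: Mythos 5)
Your proof is correct and follows essentially the same route as the paper's: apply Fr\"oberg's formula to $G^*$ and $G_*$, splitting the sum over vertex subsets by whether they contain the new vertex (connected when it dominates; an extra component when it is isolated). You simply carry out the index bookkeeping more explicitly than the paper does.
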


\begin{proof}
This is a simple application of Theorem \ref{thm:froberg}.  For the first part, any subset of vertices 
containing the dominating vertex in $G^*$ spans a connected graph and 
therefore, the only nonzero parts of $\omega(\mathbf{k}[G^*])$ come from $\omega(\mathbf{k}[G])$.  For the second part, 
we consider 
whether or not a subset of vertices in $G_*$ contains the isolated vertex $v$:  The induced 
subgraphs that \emph{do not} contain $v$ contribute $[ \ \omega(\mathbf{k}[G]) \ | \ 0 \ ]$ to 
$\omega(\mathbf{k}[G_*])$ while those that \emph{do} contain $v$ contribute $[ \ 0 \ | \ \omega(\mathbf{k}[G]) \ ] + 
\eta_{n}$.  \end{proof}

As a corollary, we find that distinct threshold graphs on a fixed number of vertices have distinct Betti diagrams.

\begin{corollary}\label{cor:threshold}
If $T$ and $T'$ are threshold graphs on the same number of vertices and $\omega(\mathbf{k}[T]) = 
\omega(\mathbf{k}[T'])$, then $T \cong T'$.  
\end{corollary}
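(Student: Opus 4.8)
The plan is to induct on the number of vertices using the recursive structure of threshold graphs given by Lemma~\ref{lem:thresholdCount} and the formulas in Proposition~\ref{prop:threshold}. Every threshold graph $T$ on $n+1$ vertices is obtained from a threshold graph on $n$ vertices by exactly one of the operations $-^*$ or $-_*$, and this first split corresponds to asking whether the highest vertex is dominating or isolated. So I would set up the induction by supposing $\omega(\mathbf{k}[T]) = \omega(\mathbf{k}[T'])$ for threshold graphs $T,T'$ on $n+1$ vertices and trying to show $T$ and $T'$ agree on the final construction step; the base case $n=0$ is trivial since there is only one graph on one vertex.

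The key observation I would exploit is that the two operations leave recognizably different traces on the reduced Betti vector. By part~(1) of Proposition~\ref{prop:threshold}, if $T = H^*$ then $\omega(\mathbf{k}[T])$ has last coordinate $0$; whereas if $T = H_*$ then by part~(2) the last coordinate is $\omega(\mathbf{k}[H])\Lambda$ in position $n$ plus $\eta_n$ in position $n$, and since $\Lambda$ has a $1$ in position $(n-1,n)$ this contributes $\beta_{n-1}(\mathbf{k}[H])$ plus $\binom{n}{n} = 1$, which is strictly positive. Hence the last coordinate of $\omega(\mathbf{k}[T])$ is zero precisely when the last operation was $-^*$, so $T$ and $T'$ must have used the same last operation. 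In the $-^*$ case, part~(1) says $\omega(\mathbf{k}[H])$ is just $\omega(\mathbf{k}[T])$ with the trailing zero deleted, so $\omega(\mathbf{k}[H]) = \omega(\mathbf{k}[H'])$ and induction gives $H \cong H'$, hence $T \cong T'$. In the $-_*$ case I would invert the affine map $x \mapsto x\Lambda + \eta_n$: the matrix $\Lambda$ has full row rank (it is "upper bidiagonal" with $1$'s on the diagonal), so $\omega(\mathbf{k}[H])$ is uniquely recovered from $\omega(\mathbf{k}[T])$, again yielding $\omega(\mathbf{k}[H]) = \omega(\mathbf{k}[H'])$ and then $H \cong H'$ by induction.

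The main thing to get right is the injectivity of $x \mapsto x\Lambda + \eta_n$ on the relevant domain, i.e. that $\Lambda$ as an $(n-1)\times n$ matrix has a left inverse; this is immediate from its bidiagonal shape (one can solve for the coordinates of $x$ one at a time, starting from the first coordinate of $x\Lambda$, which equals $x_1$), so it is really a routine linear-algebra check rather than a genuine obstacle. The only subtlety I anticipate is bookkeeping: making sure the indexing of $\Lambda$, $\eta_n$, and the lengths of the vectors $\omega(\mathbf{k}[G])$ (which has $n-1$ entries for a graph on $n$ vertices) are consistent across the induction step, and that "distinct Betti diagrams" really does follow from "distinct reduced Betti vectors" — but the latter is exactly the definition of $\omega$ together with the fixed shape of a $2$-linear Betti diagram, so no work is needed there.
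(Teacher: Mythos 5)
Your proposal is correct and takes essentially the same route as the paper: distinguish the last construction step by whether the final coordinate of $\omega$ vanishes (Proposition~\ref{prop:threshold}), then recurse using the uniqueness of the $-^\ast$/$-_\ast$ sequence from Lemma~\ref{lem:thresholdCount}. You actually spell out a point the paper's terse proof leaves implicit, namely the injectivity of $x \mapsto x\Lambda + \eta_n$ needed to push the induction through when both graphs end in $-_\ast$.
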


\begin{proof}
For any chordal graph $G$ on $k$ vertices, $\omega_{k+1}(\mathbf{k}[G_\ast]) \neq \omega_{k+1}(\mathbf{k}[G^\ast]) = 0$ by Proposition \ref{prop:threshold}.  Therefore, since distinct threshold graphs have distinct sequences of $-_\ast$ and $-^\ast$ (Lemma \ref{lem:thresholdCount}), they must also have distinct Betti diagrams.  
\end{proof}

\subsection{Betti diagrams from chordal graphs}

Next, we show that every Betti diagram from a chordal graph arises as 
the Betti diagram of a threshold graph on the same number of vertices.  Moreover, for a given chordal graph, we present an efficient algorithm for constructing its ``threshold representative''.

Let $\sim_\beta$ be the equivalence relation for graphs on $[n+1]$ defined 
by $$G \sim_{\beta} H \text{ if and only if } \beta(\mathbf{k}[G]) = \beta(\mathbf{k}[H])$$ 
and let $[G]_\beta$ denote the equivalence class 
of $G$ with respect to $\sim_\beta$.  For a chordal graph $G$ on $n+1$ vertices, a threshold graph $T$ (on $n+1$ vertices) is called a \emph{threshold representative}  of $G$ if $T \in [G]_{\beta}$.  The next theorem follows from the notion of \emph{algebraic shifting} and can be pieced together from results in \cite{GY,klivans07,woodroofe11}, but we offer a purely graph-theoretic proof instead.

\begin{theorem}\label{thm:threshold}
Every chordal graph $G$ has a unique threshold representative $T$.
\end{theorem}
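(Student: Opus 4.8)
The plan is to prove existence and uniqueness separately, exploiting the recursive structure of threshold graphs given by Proposition~\ref{prop:threshold}. For existence, I would induct on the number of vertices $n+1$ of the chordal graph $G$. The idea is to peel off one vertex at a time, but in a way that records just enough information to reconstruct a threshold graph with the same reduced Betti vector. The natural candidates to peel are \emph{simplicial vertices}: since $G$ is chordal, it has a simplicial vertex $w$ (one whose neighborhood is a clique), and I would like to understand how $\omega(\mathbf{k}[G])$ relates to $\omega(\mathbf{k}[G\setminus w])$. The cleanest case is when $w$ is either isolated or dominating: then $G = (G\setminus w)_\ast$ or $G = (G\setminus w)^\ast$, so by induction $G\setminus w$ has a threshold representative $T'$ and we take $T = T'_\ast$ or $T = (T')^\ast$, applying Proposition~\ref{prop:threshold} to conclude $\omega(\mathbf{k}[G]) = \omega(\mathbf{k}[T])$.

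The difficulty is that a general chordal graph need not have an isolated or dominating vertex. So the real work is a \emph{reduction step}: given a chordal $G$ with no dominating vertex and no isolated vertex, produce another chordal graph $G'$ on the same number of vertices with $\omega(\mathbf{k}[G']) = \omega(\mathbf{k}[G])$ but strictly "closer" to threshold — e.g., one more dominating or isolated vertex, measured by some potential function. Concretely, I would pick a simplicial vertex $w$ with neighborhood clique $N(w)$, and compare $G$ to the graph $G'$ obtained by making $w$ dominating over a carefully chosen set, or by a local swap of edges at $w$; using Theorem~\ref{thm:froberg}, the change in $\omega$ is a sum over induced subgraphs $W$ of the change in $(\#\text{components of }G[W])$, and I would argue this sum vanishes because the components only merge/split in a way that is counted symmetrically. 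An alternative, and probably cleaner, route is to use the Boij--Söderberg coefficient vector $c = \omega(\mathbf{k}[G])\Omega^{-1}$ from Theorem~\ref{thm:BS}: since $\mathbf{k}[G]$ is $2$-linear with $\beta_{0,0}=1$, Remark~\ref{rem:lattice} says $\omega(\mathbf{k}[G])$ is a lattice point of the simplex spanned by the rows of $\Omega$, hence $c \in \mathbb{Q}_{\geq 0}^n$ with $\sum_i c_i = 1$; I would then show that the recursions in Proposition~\ref{prop:threshold} act transitively enough on such lattice points that every one is realized by a threshold graph, reducing existence to a purely combinatorial statement about the image of the maps $c \mapsto c\,(\text{stuff})$.

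For uniqueness, Corollary~\ref{cor:threshold} already does the job: distinct threshold graphs on $n+1$ vertices have distinct Betti diagrams, so a chordal $G$ cannot have two non-isomorphic threshold representatives. Thus the theorem reduces entirely to existence, and the single hard point is the reduction step eliminating the assumption that $G$ has an isolated or dominating vertex. I expect that to be handled by a clever choice of which simplicial vertex to modify — likely one of maximum or minimum degree among simplicial vertices — together with a bookkeeping lemma, proved via Theorem~\ref{thm:froberg}, showing the relevant alternating sum of component counts is invariant under the local move. Finally, I would package the inductive construction as the promised efficient algorithm: at each stage either read off a dominating/isolated vertex and recurse, or apply one reduction move, terminating in at most $O(n)$ moves since each strictly increases the potential.
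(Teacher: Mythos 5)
Your skeleton matches the paper's: induct on the number of vertices, reduce to the case where some vertex is dominating or isolated so you can peel it off via Proposition~\ref{prop:threshold}, and get uniqueness from Corollary~\ref{cor:threshold}. But you have correctly identified the crux --- a chordality-preserving, Betti-preserving local move that manufactures a dominating vertex (in the connected case) or an isolated vertex (in the disconnected case) --- and then you do not supply it. ``A carefully chosen set,'' ``a local swap of edges at $w$,'' and ``I would argue this sum vanishes because the components only merge/split in a way that is counted symmetrically'' are placeholders for the entire content of the proof. The paper's version of this is the operation $G_{v\to w}$, which moves every edge $uv$ with $u\in N(v)\setminus(\{w\}\cup N(w))$ over to $w$, together with two lemmas: one showing $G_{v\to w}$ preserves chordality (when $vw\in E(G)$, or when $v,w$ lie in different components), and one showing it preserves all Betti numbers. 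The latter is not a symmetry hand-wave; it is the paired identity $\kappa_G(W\cup v)+\kappa_G(W\cup w)=\kappa_{G'}(W\cup v)+\kappa_{G'}(W\cup w)$ for all $W\subseteq V(G)\setminus\{v,w\}$, proved by classifying components of $G[W]$ according to which of $N(v)\setminus N(w)$, $N(w)\setminus N(v)$, or neither they meet. Without defining the move and proving both lemmas, the induction does not get off the ground, so the proposal has a genuine gap at its central step. (A smaller issue: peeling an arbitrary simplicial vertex does not obviously control $\omega$; the paper avoids this by only ever peeling a vertex that has already been made dominating or isolated.)

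Your alternative route via Boij--S\"oderberg coefficients is a legitimately different idea and is, in effect, how the paper later proves the stronger Theorem~\ref{thm:alg}: one shows the reduced Betti vectors of $2$-linear ideals inject into anti-lecture hall compositions with $\lambda_1=1$, of which there are exactly $2^n-1$, matching the count of noncomplete threshold graphs with distinct diagrams. That counting argument would indeed give existence of a threshold representative without any graph surgery. But as written you only gesture at it (``act transitively enough on such lattice points''); to make it work you need the explicit unimodular change of coordinates and the enumeration of Theorem~\ref{thm:ALHP}, neither of which appears in your sketch. Either route can be completed, but in both cases the decisive lemma is missing from the proposal.
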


We proceed with some new machinery:  For a graph $G$ with $v,w \in V(G)$, we define a new graph $G_{v \to w}$ on $V(G)$ with $$E(G_{v \to w}) := (E(G) \setminus \{uv \ | \ u \in N(v;w)\}) \cup \{uw \ | \ u \in N(v;w)\}$$ where $N(x) = \{ y \in V(G) \ | \ xy \in E(G)\}$ is the \emph{neighborhood} of a vertex $x$ and $N(v;w) = N(v) \setminus (\{w\} \cup N(w))$.

\begin{figure} [ht]
\begin{center}\includegraphics[width=0.55\textwidth]{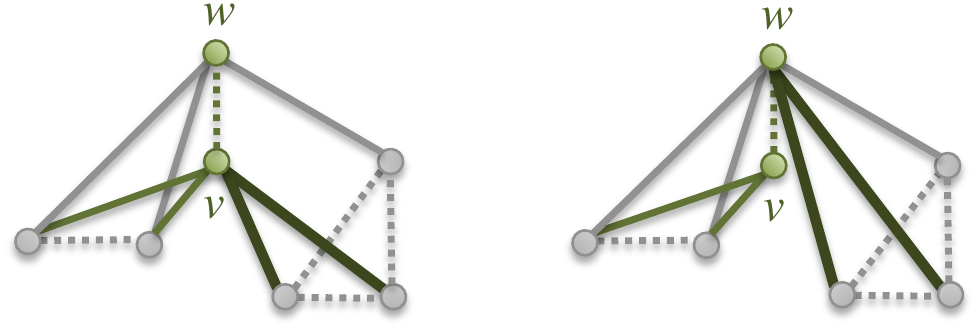} \end{center}
\caption{A comparison of a graph $G$ (left) with $G_{v \to w}$ (right).
}\label{operator}
 \end{figure}

\begin{lemma}\label{lem:chordal}  Let $G$ be a chordal graph. Then $(1)$ If $G$ is a connected with $vw \in E(G)$, then $G_{v \to w}$ is chordal; and $(2)$ If $G$ is disconnected with $v,w \in V(G)$ in separate components, then $G_{v \to w}$ is chordal.
\end{lemma}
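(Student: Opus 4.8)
The plan is to argue by contradiction: suppose $G_{v\to w}$ contains an induced cycle $C$ of length at least four, and derive a chordless cycle of the same length in $G$, contradicting chordality of $G$. The key structural observation is that the edge set of $G_{v\to w}$ differs from that of $G$ only in the "switch set" $N(v;w) = N(v)\setminus(\{w\}\cup N(w))$: for each $u\in N(v;w)$ we delete $uv$ and insert $uw$, and all other adjacencies are unchanged. In particular, $v$ is never an endpoint of a newly inserted edge, and $w$ is never an endpoint of a deleted edge. So any induced cycle $C$ in $G_{v\to w}$ either (i) avoids $w$, in which case the only way $C$ can fail to be induced in $G$ is through a chord incident to $v$ coming from a deleted edge $uv$ with $u\in N(v;w)$; or (ii) passes through $w$.

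First I would handle case (i). If $C$ avoids $w$ and uses at most one vertex of $N(v;w)$, then $C$ is already an induced cycle in $G$ (no inserted edge can be a missing chord since inserted edges are all incident to $w\notin C$, and no deleted edge $uv$ can be a chord of $C$ since that needs two vertices $u,v$ of $C$, but then $v\in C$ and $u$ is its unique $N(v;w)$-neighbor, handled below), a contradiction. The remaining subcase is $v\in C$ with two cycle-neighbors; I would show that replacing $v$ by $w$ along $C$ — legitimate precisely because each neighbor $u$ of $v$ on $C$ lies in $N(v;w)$ and hence is adjacent to $w$ in $G$ (and in $G_{v\to w}$) — either produces an induced cycle through $w$ in $G$ of the same length, or a shorter chordless cycle, again contradicting chordality; here I would use that in case (2) $v$ and $w$ lie in different components of $G$, so an edge $uv\in E(G)$ forces $u$ in $v$'s component, controlling where chords can appear, while in case (1) connectedness plus $vw\in E(G)$ is used to ensure $w$'s neighborhood interacts correctly.

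Next I would handle case (ii), where $C$ passes through $w$ in $G_{v\to w}$ with cycle-neighbors $a,b$. The edges $wa,wb$ are present in $G_{v\to w}$; each is either an original edge of $G$ (so $a\in N(w)$ in $G$) or an inserted edge (so $a\in N(v;w)$, i.e. $a\in N(v)\setminus N(w)$ in $G$, and in particular $av\in E(G)$). If both $wa,wb$ are original, $C$ itself is an induced cycle of $G$ unless some chord was created; but created chords are all incident to $w$, which already has its two cycle-neighbors, so there is no new chord and we are done. If at least one of $wa,wb$, say $wa$, is inserted, I would reroute $C$ through $v$: since $av\in E(G)$, the walk $a\,v$ together with the rest of $C$ gives a closed walk in $G$ of length $|C|$ or $|C|+1$ (depending on whether $vb\in E(G)$ and on whether $v$ already lies on $C$), and I would extract from it a chordless cycle of length $\ge 4$ in $G$ — using the component separation in case (2), or connectedness and the edge $vw$ in case (1), to rule out the degenerate possibilities (e.g. $v$ coinciding with a non-neighboring vertex of $C$, or unexpected chords through $v$). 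In every branch we contradict the chordality of $G$, so $G_{v\to w}$ is chordal.

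The main obstacle I expect is the bookkeeping in case (ii) when the rerouted walk through $v$ is not automatically a chordless cycle: one must carefully argue that the shortening process (taking a shortest cycle inside the closed walk) cannot land on a triangle, and that any chord through $v$ in $G$ would, via the switch relation $N(v;w)$, force a chord through $w$ in $G_{v\to w}$ that was assumed absent. The hypotheses split exactly here — connectedness with $vw\in E(G)$ in part (1) versus $v,w$ in distinct components in part (2) — and getting both hypothesis regimes to close the same contradiction uniformly is the delicate point; a clean way to organize it is to observe that in both cases $N(v;w)\cup\{w\}$ "dominates" the neighbors of $v$ that can appear on a cycle, so the swap $v\leftrightarrow w$ is always locally chord-preserving.
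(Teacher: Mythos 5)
Your overall strategy---assume a chordless cycle $C$ of length at least four in $G_{v\to w}$, case on whether $v$ and $w$ lie on $C$, and transfer $C$ back to an induced cycle in $G$ by swapping $v$ and $w$---is exactly the paper's strategy. But two points in your write-up are genuine gaps. First, your key structural claim is stated backwards: you assert that each $C$-neighbor $u$ of $v$ in $G_{v\to w}$ ``lies in $N(v;w)$ and hence is adjacent to $w$ in $G$.'' By definition $N(v;w)=N(v)\setminus(\{w\}\cup N(w))$ consists of vertices that are \emph{not} adjacent to $w$ in $G$, and these are precisely the vertices whose edges to $v$ are \emph{deleted}; so a $C$-neighbor of $v$ in $G_{v\to w}$ lies in $N_G(v)\cap(\{w\}\cup N_G(w))$, i.e.\ it is adjacent to $w$ for the opposite reason. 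The same inversion appears in your case (ii) (``created chords are all incident to $w$''---the edges of $G$ that could be lost chords are the deleted ones, all incident to $v$, not $w$). The conclusions you need happen to be true, but the reasons you give would, taken literally, break the argument; and in part (2) you miss that $N(v;w)=N(v)$, so $v$ is isolated in $G_{v\to w}$ and cannot lie on $C$ at all, which collapses several of your subcases.

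Second, your contradiction in several branches is only ``a shorter chordless cycle'' or ``a chordless cycle extracted from a closed walk,'' which is no contradiction if that cycle is a triangle. You flag this yourself as the main obstacle but do not close it, and closing it is where the real work lies. The paper never runs a shortening argument: in each subcase it produces either an explicit chord of $C$ in $G_{v\to w}$ (e.g.\ when $v,w\in V(C)$, the containment $N_{G'}(v)\setminus\{w\}\subseteq N_{G'}(w)$ forces $vw\in E(C)$ and makes $xw$ a chord, where $x$ is the other $C$-neighbor of $v$), or an explicit induced cycle of the same length $\ge 4$ in $G$ obtained by a single vertex substitution, such as $G[V(C)\cup v]$ or $G[V(C\setminus v)\cup w]$. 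To complete your proof you would need to replace the ``extract a chordless cycle'' steps with such explicit substitutions, after first correcting the description of which neighbors of $v$ survive the operation.
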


\begin{proof}  For each part, we suppose $C$ is a cycle with length $l \geq 4$ in $G' = G_{v \to w}$ and show that $C$ has a chord in $G'$.

In $(1)$, if $w \notin V(C)$, then $C \subseteq G$ since the only new edges of $G'$ contain $w$ and therefore $C$ has at least one chord in $G$.  If every chord of $C$ in $G$ is removed in $G'$, then they must each contain $v$ and thus $G[V(C \setminus v) \cup w]$ is an induced cycle, which is a contradiction.  If $w \in V(C)$, $v \notin V(C)$, and $C$ does not have a chord in $G'$, then $G[V(C) \cup v]$ is an induced cycle since $N(v) \subseteq N(w)$ in $G'$, another contradiction.  If $v,w \in V(C)$, then $vw \in E(C)$ and $xw$ is a chord of $C$ in $G'$, where $x$ is the other neighbor of $v$ in $C$, since $N(v) \subseteq N(w)$ in $G'$.  

In $(2)$, if $w \notin V(C)$, then $C$ contains a chord in $G \setminus w = G' \setminus w \subseteq G'$.  So suppose $w \in V(C)$ and $C$ has no chord in $G'$.  Then $G[V(C \setminus w)]$ is contained in the connected component of either $v$ or $w$ in $G$.  If the former is true, then $G[V(C \setminus w) \cup v]$ is an induced cycle and if the latter is true, then $C$ itself is an induced cycle in $G$, both of which are contradictions.   \end{proof}

For a graph $H$ with $W \subseteq V(H)$, let $\kappa_H(W)$ denote the number of connected components in $H[W]$.

\begin{lemma}\label{lem:betti}
Let $G$ be a chordal graph.  Then $(1)$ If $G$ is connected with $vw \in E(G)$, then $G_{v \to w} \in [G]_{\beta}$; and (2) If $G$ is disconnected with $v,w \in V(G)$ in separate components, then $G_{v \to w} \in [G]_{\beta}$.
\end{lemma}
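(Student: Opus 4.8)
The plan is to translate $G_{v\to w}\in[G]_\beta$ into a counting identity for components of induced subgraphs, and then to establish that identity by splitting the relevant subsets into four classes according to which of $v,w$ they contain; the only step where chordality is genuinely needed is an auxiliary claim inside part~$(1)$, and that is where I expect the work to be.

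Write $G'=G_{v\to w}$, and recall that $G'$ is chordal by Lemma~\ref{lem:chordal}, so Theorem~\ref{thm:froberg} computes $\beta(\mathbf{k}[G])$ and $\beta(\mathbf{k}[G'])$ from the functions $W\mapsto\kappa_G(W)$ and $W\mapsto\kappa_{G'}(W)$. Since $V(G)=V(G')$, comparing these formulas termwise shows that $G'\in[G]_\beta$ is equivalent to
\[
\sum_{W\in\binom{V(G)}{j}}\kappa_G(W)=\sum_{W\in\binom{V(G)}{j}}\kappa_{G'}(W)\qquad\text{for every }j.
\]
I would prove this by partitioning $\binom{V(G)}{j}$ according to which of $v,w$ a set $W$ contains, writing $W_0=W\setminus\{v,w\}$ throughout. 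The edge sets of $G$ and $G'$ differ only in edges incident to $v$ or $w$, so $G[W]=G'[W]$ when $v,w\notin W$ and those terms already match; moreover one checks directly from the definition of $G_{v\to w}$ that $\bigl(N_{G'}(v)\cup N_{G'}(w)\bigr)\setminus\{v,w\}=\bigl(N_G(v)\cup N_G(w)\bigr)\setminus\{v,w\}$ in both parts of the lemma.

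For part~$(1)$ the edge $vw$ persists in $G'$ (it is never among the removed edges $uv$, $u\in N(v;w)$), so whenever $\{v,w\}\subseteq W$ the vertices $v$ and $w$ lie in a single component of both $G[W]$ and $G'[W]$; the extra vertices of $W_0$ that join them are precisely those in components of $G[W_0]=G'[W_0]$ meeting $N_G(v)\cup N_G(w)$, respectively $N_{G'}(v)\cup N_{G'}(w)$, and by the displayed identity on neighborhoods these two collections of components coincide, giving $\kappa_G(W)=\kappa_{G'}(W)$. For $W$ containing exactly one of $v,w$ I would pair $\{v\}\cup W_0$ with $\{w\}\cup W_0$; using $N_{G'}(v)\cap W_0=N_G(v)\cap N_G(w)\cap W_0$ and $N_{G'}(w)\cap W_0=\bigl(N_G(v)\cup N_G(w)\bigr)\cap W_0$, the desired cancellation for this pair reduces to $\alpha+\beta=\gamma+\delta$, where $\alpha,\beta,\gamma,\delta$ count the components of $G[W_0]$ meeting $N_G(v)$, $N_G(w)$, $N_G(v)\cap N_G(w)$, $N_G(v)\cup N_G(w)$ respectively. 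The key claim is that, by chordality, any component $K$ of $G[W_0]$ meeting both $N_G(v)$ and $N_G(w)$ must also meet $N_G(v)\cap N_G(w)$; granting this, every component of $G[W_0]$ contributes $0$ to $\alpha+\beta-\gamma-\delta$, whether it meets neither, exactly one, or (hence) both of $N_G(v),N_G(w)$. To prove the claim, suppose $K$ contains no common neighbor of $v$ and $w$ and take a shortest path from $v$ to $w$ in $G[\{v,w\}\cup K]$ with the edge $vw$ deleted; such a path exists because $K$ is connected and meets both neighborhoods, and it has length at least $3$ by the supposition. Adjoining the edge $vw$ gives a cycle of length $\geq 4$ in $G$, and the minimality of the path excludes every possible chord of this cycle (any chord between two internal vertices would shortcut it, and a chord from $v$ or from $w$ to an internal vertex would yield a shorter $v$--$w$ path avoiding $vw$), contradicting chordality of $G$. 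This shrinking-cycle argument is the part I expect to require the most care.

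For part~$(2)$, the fact that $v,w$ lie in different components of $G$ forces $N(v;w)=N_G(v)$, so $v$ is isolated in $G'$ and $N_{G'}(w)=N_G(v)\cup N_G(w)$; also no component of $G[W_0]$ meets both $N_G(v)$ and $N_G(w)$. Writing $k_v(W_0)$ and $k_w(W_0)$ for the numbers of components of $G[W_0]$ meeting $N_G(v)$ and $N_G(w)$, one gets $\kappa_G(W)=\kappa_G(W_0)-k_v(W_0)-k_w(W_0)+2=\kappa_{G'}(W)$ for every $W\supseteq\{v,w\}$, and $\kappa_{G'}(\{v\}\cup W_0)-\kappa_G(\{v\}\cup W_0)=k_v(W_0)=\kappa_G(\{w\}\cup W_0)-\kappa_{G'}(\{w\}\cup W_0)$ for the two classes containing exactly one of $v,w$; summing the latter over the shared parameter $W_0$ the contributions cancel, and together with the $v,w\notin W$ class this yields the displayed identity, hence $G_{v\to w}\in[G]_\beta$.
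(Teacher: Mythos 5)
Your proof is correct and follows essentially the same route as the paper: the same case split on $W \cap \{v,w\}$ and the same pairing of $W_0 \cup \{v\}$ with $W_0 \cup \{w\}$ to get cancellation in Fr\"oberg's formula. The one place you go beyond the paper is in part (1), where you isolate and prove the chordality-dependent claim (a component of $G[W_0]$ meeting both $N(v)$ and $N(w)$ must meet $N(v) \cap N(w)$) that is needed for the count of $\kappa_{G'}(W \cup v)$ and that the paper's proof leaves as ``straightforward to check''.
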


\begin{proof}
This is a straightforward application of Theorem \ref{thm:froberg} after we make the following calculations.  For each part, let $G' = G_{v \to w}$ and $W \subseteq V(G)$.    

In $(1)$, if $v,w \notin W$, then $\kappa_G(W) = \kappa_{G'}(W)$ since $G \setminus \{v,w\} = G' \setminus \{v,w\}$ and if $v,w \in W$, then $\kappa_G(W) = \kappa_{G'}(W)$ because the component in $G[W]$ containing $v$ and $w$ spans the same set of vertices as that of $G'[W]$.  For the remaining subsets of $V(G)$, we prove that $\kappa_G(W \cup v) + \kappa_G(W \cup w) = \kappa_{G'}(W \cup v) + \kappa_{G'}(W \cup w)$ for every $W \subseteq V(G) \setminus \{v,w\}$.  Let $m_\circ(W)$, $m_w(W)$, and $m_v(W)$ denote the number of connected components of $G[W]$ that do not contain any elements of $N(v) \cup N(w)$, $N(v) \setminus N(w)$, and $N(w) \setminus N(v)$, respectively.  It is straightforward to check that $\kappa_G(W \cup v) = 1 + m_\circ(W) + m_w(W)$,  $\kappa_G(W \cup w) =  1 + m_\circ(W) + m_v(W)$, $\kappa_{G'}(W \cup v) = 1 + m_\circ(W) + m_v(W) + m_w(W)$, and $\kappa_{G'}(W \cup w) = 1 + m_\circ(W)$.

In $(2)$, we record the difference between $\kappa_G(W)$ and $\kappa_{G'}(W)$.  If $v,w \notin W$, then $\kappa_G(W) = \kappa_{G'}(W)$ since $G \setminus \{v,w\} = G' \setminus \{v,w\}$.  If $v,w \in W$, then $\kappa_G(W) = \kappa_{G'}(W)$ because every vertex in the component of $v$ in $G[W]$ gets moved to the component of $w$ in $G'[W]$.  If $v \in W$ and $w \notin W$, then $\kappa_G(W) = \kappa_{G'}(W) -1$.  If $w \in W$ and $v \notin W$, then $\kappa_G(W) = \kappa_{G'}(W) +1$.     
\end{proof}

We are now ready to prove Theorem \ref{thm:threshold}.

\begin{proof}[Proof of Theorem \ref{thm:threshold}]
We induct on $|V(G)|$.  Let $G$ be a chordal graph on $n$ vertices and fix a vertex $v \in V(G)$.  We will apply the operations $-_{v \to w}$ or $-_{w \to v}$ to $G$ to a get a graph where $v$ is either dominating or isolated.

If $G$ is connected and $v$ is not dominating, then for any vertex $u \in G$ with $d(u,v) = 2$, let $w \in N(v) \cap N(u)$ and 
replace $G$ with $G_{w \to v}$.  Repeat this until $v$ is a dominating vertex, i.e. there are no more 
elements $u$ with $d(v,u) = 2$.  The process terminates since $G$ is finite 
and connected.  By Lemma \ref{lem:chordal}, the graph $G$ is chordal at every step and 
by Lemma \ref{lem:betti}, its Betti diagram stays fixed.  Since $v$ is dominating and $G \setminus v$ is chordal (being an induced subgraph of a chordal graph), 
$\beta(\mathbf{k}[G]) = \beta(\mathbf{k}[G \setminus v])$.  So, by induction, there is a unique (up to isomorphism) threshold graph $T$ such that $\beta(\mathbf{k}[T^*]) = \beta(\mathbf{k}[T]) = \beta(\mathbf{k}[G \setminus v]) = \beta(\mathbf{k}[G])$.

If $G$ is disconnected, let $w \in V(G)$ be in a separate component in $G$ from $v$.  By Lemma \ref{lem:chordal} and Lemma \ref{lem:betti}, $G_{v \to w}$ is chordal and $\beta(\mathbf{k}[G]) = \beta(\mathbf{k}[G_{v \to w}])$ and   
by induction, there exists a unique (up to isomorphism) threshold graph 
$T \in [G_{v \to w} \setminus v]_{\beta}$.  Thus, 
$T_* = T \cup \{\alpha\} \in [G]_{\beta}$ and $\beta(\mathbf{k}[T_*]) = \beta(\mathbf{k}[G])$.  
\end{proof}

\begin{remark}
The algorithm presented in the proof of Theorem \ref{thm:threshold} is fast.  A crude analysis of the complexity is as follows:  For each vertex of $G$, we decompose $G$ into its connected components which takes $O(|V(G)|+|E(G)|)$ and then we repeatedly apply the operations $-_{v \to w}$ or $-_{w \to v}$ which, by amortized analysis, takes only $O(|E(G)|)$ since each edge is moved at most once.  Thus, the total complexity is $O(|V(G)|(|V(G)|+|E(G)|)) \approx O(|V(G)|^3)$.  The authors suspect that a more thorough analysis would yield a complexity of $O(|V(G)|^2)$ which is the best one could hope for with this problem.
\end{remark}

As simple corollaries of Theorem \ref{thm:threshold}, we recover two special classes of graphs that are invariant 
under $\beta$.  

\begin{corollary}\label{cor:tree}
If $G$ is a tree on $n+1$ vertices, then $\beta_{i,i+1}(\mathbf{k}[G]) = i{n \choose i+1}$.
\end{corollary}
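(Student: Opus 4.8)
The plan is to reduce an arbitrary tree to the star $K_{1,n}$ (equivalently, to an edgeless graph with a dominating vertex attached) by re-running the first half of the proof of Theorem \ref{thm:threshold}, and then read off the Betti numbers from Example \ref{ex:iso}. A tree $G$ on $n+1$ vertices is connected and, having no cycles at all, vacuously chordal, so that argument applies verbatim: fix a vertex $v$, and as long as $v$ is not dominating repeatedly replace $G$ by $G_{w\to v}$ for a suitable $w\in N(v)$. By Lemma \ref{lem:chordal} each intermediate graph stays chordal, by Lemma \ref{lem:betti} its Betti diagram is unchanged, and the process terminates in a chordal graph $G'$ in which $v$ is dominating and $\beta(\mathbf{k}[G'])=\beta(\mathbf{k}[G])$.

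The one observation to add to that argument is that the operation $-_{w\to v}$ preserves the number of edges: it removes the $|N(w;v)|$ edges $\{xw : x\in N(w;v)\}$ and, because every $x\in N(w;v)$ is by definition not already adjacent to $v$ (and $x\neq v,w$), it adds exactly $|N(w;v)|$ genuinely new edges $\{xv : x\in N(w;v)\}$, with the removed and added sets disjoint. Hence $G'$ has the same number of edges as $G$, namely $n$. Since $v$ is dominating in $G'$ it is already incident to $n$ of them, so every edge of $G'$ meets $v$; therefore $G'\setminus v$ is edgeless on its $n$ vertices, i.e. $G' = H^{\ast}$ with $H$ the edgeless graph on $V(G)\setminus\{v\}$ (equivalently $G'\cong K_{1,n}$).

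Finally, Proposition \ref{prop:threshold}(1) says that passing to $H^{\ast}$ only appends a trailing zero to the reduced Betti vector, so $\beta(\mathbf{k}[G]) = \beta(\mathbf{k}[G']) = \beta(\mathbf{k}[H])$; and Example \ref{ex:iso}, applied with $n$ isolated vertices in place of $n+1$, gives $\beta_{i,i+1}(\mathbf{k}[H]) = i\binom{n}{i+1}$ for every $i\geq 1$. Chaining these equalities yields the formula. I do not anticipate any real difficulty: everything follows from Theorem \ref{thm:threshold} together with the results feeding it. The only point needing genuine care is the edge-count bookkeeping in the second paragraph, which is what pins the reduced graph down to the star — a threshold graph on $n+1$ vertices with $n$ edges need not be a tree (e.g. $K_3\cup K_1$), so one really must exploit that the $-_{w\to v}$ steps keep the edge count at $n$ rather than merely invoke the existence of a threshold representative; the degenerate cases $n\le 1$ are immediate.
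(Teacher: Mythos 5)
Your proof is correct and follows essentially the same route as the paper's: run the reduction from the proof of Theorem \ref{thm:threshold}, observe that $-_{w\to v}$ preserves the edge count so the result is the star, and then invoke Proposition \ref{prop:threshold} and Example \ref{ex:iso}. Your extra bookkeeping (that a dominating vertex plus exactly $n$ edges forces the star, which mere thresholdness would not) is a point the paper's own proof passes over more quickly, and it is worth making explicit.
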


\begin{proof}
Since $G$ has exactly $n$ edges and $-_{v \to w}$ preserves the number of edges in $G$, the procedure outlined in the proof of Theorem \ref{thm:threshold} yields a threshold representative, $T$, of $G$ that is a star on $n+1$ vertices, i.e. a single dominating vertex $v$ and no other edges. Therefore, $T \setminus v$ consists of $n$ isolated points and, by Proposition \ref{prop:threshold} and Example \ref{ex:iso}, $\beta_{i,i+1}(\mathbf{k}[G]) = \beta_{i,i+1}(\mathbf{k}[T]) = \beta_{i,i+1}(\mathbf{k}[T \setminus v]) = (i){n \choose i+1}$.
\end{proof}

The graph from a triangulation of a polygon is called \emph{maximally outerplanar}.

\begin{corollary}
If $G$ is a maximal outerplanar graph on $n+1$ vertices, then $\beta_{i,i+1}(\mathbf{k}[G]) = i {n-1 \choose i+1}.$
\end{corollary}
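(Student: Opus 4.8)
The plan is to run an induction on the number of vertices, modelled on the proof of Corollary~\ref{cor:tree} but adapted to the fact that a maximal outerplanar graph need not have a dominating vertex. Instead of making a vertex dominating, I would delete a degree-two vertex and track the effect on Theorem~\ref{thm:froberg}'s formula, then close the induction with Pascal's rule.

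For the setup and base case: a maximal outerplanar graph $G$ on $m$ vertices is chordal (being, for instance, a $2$-tree), so Theorem~\ref{thm:froberg} applies; and for $m \le 3$ the graph is complete, $\mathbf{k}[G]$ is a polynomial ring, and $\beta_{i,i+1}(\mathbf{k}[G]) = 0 = i\binom{m-2}{i+1}$ for all $i \ge 1$, which handles the base case. For $m \ge 4$, $G$ has an \emph{ear}: a vertex $v$ of degree two whose two neighbours $a$ and $b$ are adjacent; moreover $G \setminus v$ is again maximal outerplanar, now on $m-1$ vertices, and stays chordal as an induced subgraph of $G$.

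For the inductive step, take $m = n+1 \ge 4$, fix an ear $v$ with neighbours $a,b$, and split the sum $\sum_{W \in \binom{V(G)}{i+1}} \big(\kappa_G(W) - 1\big)$ from Theorem~\ref{thm:froberg} according to whether $v \in W$. The subsets avoiding $v$ contribute $\beta_{i,i+1}(\mathbf{k}[G\setminus v])$, because $G[W] = (G\setminus v)[W]$ for such $W$. For $W = \{v\}\cup W'$ with $W' \in \binom{V(G)\setminus v}{i}$, the key observation is how $v$ sits inside $G[W]$: since $N_G(v) = \{a,b\}$ and $ab \in E(G)$, the vertex $v$ is its own component of $G[W]$ exactly when $a,b \notin W'$, and otherwise it is absorbed into the single component of $(G\setminus v)[W']$ containing $a$ or $b$; hence $\kappa_G(\{v\}\cup W') = \kappa_{G\setminus v}(W') + 1$ when $a,b \notin W'$ and $\kappa_G(\{v\}\cup W') = \kappa_{G\setminus v}(W')$ otherwise. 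Summing over $W'$ gives $\beta_{i-1,i}(\mathbf{k}[G\setminus v]) + \binom{n-2}{i}$, since exactly $\binom{(m-1)-2}{i} = \binom{n-2}{i}$ of the $W'$ avoid both $a$ and $b$. Altogether
$$\beta_{i,i+1}(\mathbf{k}[G]) \;=\; \beta_{i,i+1}(\mathbf{k}[G\setminus v]) \;+\; \beta_{i-1,i}(\mathbf{k}[G\setminus v]) \;+\; \binom{n-2}{i}.$$
Substituting the inductive hypothesis $\beta_{j,j+1}(\mathbf{k}[G\setminus v]) = j\binom{n-2}{j+1}$ (which also covers $j=0$, since $\beta_{0,1}=0$) turns the right-hand side into $i\binom{n-2}{i+1} + (i-1)\binom{n-2}{i} + \binom{n-2}{i} = i\binom{n-2}{i+1} + i\binom{n-2}{i}$, and Pascal's identity collapses this to $i\binom{n-1}{i+1}$, as required.

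I expect the only delicate points to be bookkeeping: verifying the standard facts that maximal outerplanar graphs are chordal and that deleting an ear stays within the class, checking the $i=1$ and small-$m$ boundary cases of the displayed identity (where one invokes Theorem~\ref{thm:froberg} in its extreme range and uses $\beta_{0,1}=0$ directly), and confirming that the case analysis for $\kappa_G(\{v\}\cup W')$ is exhaustive. I do not anticipate a genuine obstacle: once the ear is chosen, the recursion is forced and Pascal's rule does the rest.
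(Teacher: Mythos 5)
Your proof is correct, but it takes a genuinely different route from the paper's. The paper deduces the corollary from the machinery of Section 4: by Theorem~\ref{thm:threshold} the threshold representative of a maximal outerplanar graph is a cone over (something $\beta$-equivalent to) a tree on $n$ vertices, so Proposition~\ref{prop:threshold}(1) and Corollary~\ref{cor:tree} finish in two lines. You instead argue directly from Fr\"oberg's formula (Theorem~\ref{thm:froberg}) by induction on the number of vertices, peeling off an ear $v$ with adjacent neighbours $a,b$, splitting the component-count sum according to whether $W$ contains $v$, and closing with Pascal's identity; the key recursion $\beta_{i,i+1}(\mathbf{k}[G]) = \beta_{i,i+1}(\mathbf{k}[G\setminus v]) + \beta_{i-1,i}(\mathbf{k}[G\setminus v]) + \binom{n-2}{i}$ checks out, as do the standard facts you invoke (maximal outerplanar graphs are $2$-trees, hence chordal; every one on at least four vertices has an ear whose removal stays in the class; $\beta_{0,1}=0$ covers the $i=1$ boundary). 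What the paper's approach buys is brevity and a structural explanation (the answer is literally the tree case in disguise, via the threshold representative); what yours buys is self-containedness --- it needs only Theorem~\ref{thm:froberg} and elementary graph theory, not Theorem~\ref{thm:threshold}, and in fact it sidesteps the paper's somewhat terse claim about the exact shape of the threshold representative of a maximal outerplanar graph. Note also that your recursion is exactly the ``add an ear'' analogue of Proposition~\ref{prop:threshold}, which is a nice parallel worth observing.
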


\begin{proof}
By Theorem $\ref{thm:threshold}$, the threshold representative $T$ of $G$ consists of a dominating vertex $v$ and a path on $V(T) \setminus v$.  In particular, $T \setminus v$ is a tree on $n$ vertices.  The result now follows from Proposition \ref{prop:threshold} and Corollary \ref{cor:tree}.
\end{proof}

\subsection{Betti diagrams of algebras and modules}

Here we present the main results of the paper -- that every Betti diagram from a $2$-linear ideal in $S$ arises from a Stanley-Reisner ring of a threshold graph on $n+1$ vertices and that every Betti diagram from an $S$-module with a $2$-linear resolution arises from a direct sum of Stanley-Reisner rings constructed from threshold graphs on $n+1$ vertices.  

To begin, we establish bijections between the set of threshold graphs on $n+1$ vertices, the set of Betti diagrams from $2$-linear ideals in $S$, and the set of anti-lecture hall compositions of length $n$ bounded above by $1$.  An integer sequence $\lambda = (\lambda_1,\lambda_2,\dots,\lambda_n)$ of the form $$t \geq \frac{\lambda_1}{1} \geq \frac{\lambda_2}{2} \geq \dots \geq \frac{\lambda_n}{n} \geq 0$$ is called \emph{anti-lecture hall composition of length $n$} bounded above by $t$.  These sequences were introduced in \cite{CS} and are a well-studied variation of the \emph{lecture hall partitions} in \cite{BE1,BE2}.  For our purposes, we only need the following: 

\begin{theorem}[Corteel-Lee-Savage, \cite{CLS}]\label{thm:ALHP}
There are $(t+1)^n$ anti-lecture hall compositions of length $n$ bounded above by $t$.
\end{theorem}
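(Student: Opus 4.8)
\textbf{Proof proposal for Theorem \ref{thm:ALHP}.}

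The plan is to exhibit an explicit bijection between the set $A_n(t)$ of anti-lecture hall compositions of length $n$ bounded above by $t$ and the set $\{0,1,\dots,t\}^n$, which has cardinality $(t+1)^n$. First I would introduce the standard change of coordinates used in the lecture hall literature: given $\lambda = (\lambda_1,\dots,\lambda_n) \in A_n(t)$, set $\mu_i = \lfloor \lambda_i / i \rfloor$ and record the ``fractional part'' $r_i = \lambda_i - i\mu_i \in \{0,1,\dots,i-1\}$, or alternatively work directly with the ratios. The defining chain $t \geq \lambda_1/1 \geq \lambda_2/2 \geq \cdots \geq \lambda_n/n \geq 0$ says precisely that the rationals $q_i := \lambda_i/i$ form a weakly decreasing sequence in $[0,t]$ with $q_i \in \frac{1}{i}\mathbb{Z}$. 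The idea is to strip this down one index at a time: the top index contributes a free choice of $\lambda_1 \in \{0,1,\dots,t\}$, and conditioned on $q_1,\dots,q_{i-1}$, the value $\lambda_i$ ranges over $\{0,1,\dots,\lfloor i\, q_{i-1}\rfloor\}$.

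The cleaner route, and the one I would actually carry out, is induction on $n$. For $n=1$ the set $A_1(t)$ is exactly $\{0,1,\dots,t\}$, so $|A_1(t)| = t+1$. For the inductive step, I would stratify $A_n(t)$ by the value of $\lambda_n$. Observe that deleting the last part sends $\lambda = (\lambda_1,\dots,\lambda_n)$ to $(\lambda_1,\dots,\lambda_{n-1}) \in A_{n-1}(t)$, and conversely, given $\lambda' = (\lambda_1,\dots,\lambda_{n-1}) \in A_{n-1}(t)$, the legal extensions are exactly those $\lambda_n$ with $0 \leq \lambda_n \leq n \cdot (\lambda_{n-1}/(n-1))$, i.e. $\lambda_n \in \{0,1,\dots, \lfloor n\lambda_{n-1}/(n-1)\rfloor\}$. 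This gives a recursion, but the number of extensions depends on $\lambda_{n-1}$, so a naive count does not immediately telescope to $(t+1)^n$; one needs to sum $\lfloor n\lambda_{n-1}/(n-1)\rfloor + 1$ over all of $A_{n-1}(t)$ and show it equals $(t+1)|A_{n-1}(t)|$. The hard part will be controlling these floor functions: the cleanest fix is to prove the sharper statement that for each fixed value $a \in \{0,\dots,t\}$ of $\lambda_1$, the number of completions is $(t+1)^{n-1}$ — equivalently, to run the induction ``from the top'', peeling off $\lambda_1$ first and rescaling, so that the subproblem for $(\lambda_2,\dots,\lambda_n)$ becomes an anti-lecture hall condition of length $n-1$ with a bound that, after clearing denominators, again admits exactly $(t+1)^{n-1}$ solutions.

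Alternatively — and this is the approach I expect to be the most robust — I would simply cite or reproduce the bijection of Corteel, Lee, and Savage directly: map $\lambda \in A_n(t)$ to the sequence $(b_1,\dots,b_n)$ where $b_i$ records, in a base-dependent way, the ``slack'' $\lfloor i\, q_{i-1} \rfloor - \lambda_i$ together with enough residue information to recover $\lambda_i$ from $q_{i-1}$, checking that each $b_i$ ranges over a set of size exactly $t+1$ independently of the previous choices. Verifying this is a bookkeeping exercise in floor arithmetic and is the one genuinely fiddly step; everything else is formal. Since the statement is quoted from \cite{CLS} and used only as a counting input (to match $2^n$ threshold graphs when $t=1$), for the purposes of this paper it suffices to invoke their theorem, and the self-contained argument above via induction on $n$ serves as a sanity check.
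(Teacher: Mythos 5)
The paper offers no proof of this statement: it is quoted from Corteel, Lee, and Savage \cite{CLS} and used purely as a counting input, so your closing remark that ``it suffices to invoke their theorem'' is exactly what the authors do. Judged as a proof, however, your proposal has a genuine gap, and the one concrete reduction you commit to is false. The ``sharper statement'' you propose --- that for each fixed value $a \in \{0,\dots,t\}$ of $\lambda_1$ the number of completions to a full anti-lecture hall composition is $(t+1)^{n-1}$ --- already fails for $n=2$, $t=1$: the choice $\lambda_1 = 0$ forces $\lambda_2 = 0$ (one completion), while $\lambda_1 = 1$ allows $\lambda_2 \in \{0,1,2\}$ (three completions). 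The strata have sizes $1$ and $3$, not $2$ and $2$; only their sum is $4 = (t+1)^2$. More generally, for $n=2$ the stratum over $\lambda_1 = a$ has $2a+1$ elements, and the total is right only because $\sum_{a=0}^{t}(2a+1) = (t+1)^2$. The count is emphatically not uniform over $\lambda_1$ (nor over $\lambda_n$), so no induction that fixes one part and counts strata separately can be ``formal.'' This is precisely the floor-function difficulty you correctly flag in your second paragraph, and neither of your remaining options resolves it: the first sketch (peel off $\lambda_1$, then condition) runs into the same non-uniformity, and the third option is a citation, not a proof.

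If you want a self-contained argument, you must actually construct the bijection whose existence you gesture at: an encoding of $\lambda$ by digits $b_1,\dots,b_n$ in which each $b_i$ provably ranges over a set of size exactly $t+1$ independently of the earlier digits. Arranging this requires a careful ceiling/floor analysis (or, equivalently, a refined generating-function identity), and that analysis is the entire content of the theorem --- it cannot be dismissed as bookkeeping, since the naive strata visibly have the wrong sizes. For the purposes of this paper, replacing your proposal with the bare citation of \cite{CLS} is both sufficient and what the authors intend.
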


We remark that $\mathbf{k}[G] = R$ if $G$ the complete graph on $n+1$ vertices, so we shall ignore that graph for the rest of the paper.
 
 \begin{proposition}\label{prop:bijection}
 The set of noncomplete threshold graphs on $n+1$ vertices, the set of Betti diagrams of quotients of $S$ by $2$-linear ideals, and the set of anti-lecture hall compositions of length $n$ with $\lambda_1 = 1$ are in bijective correspondence.
 \end{proposition}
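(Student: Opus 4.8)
The plan is to establish the bijection by counting and by exhibiting an explicit map on one of the three sides, then transferring it around the triangle. First I would count: by Lemma~\ref{lem:thresholdCount} there are $2^n$ threshold graphs on $n+1$ vertices, exactly one of which (the complete graph) is being excluded, leaving $2^n - 1$ noncomplete threshold graphs. By Theorem~\ref{thm:ALHP} with $t=1$ there are $2^n$ anti-lecture hall compositions of length $n$ bounded above by $1$; those with $\lambda_1 = 1$ are all of them except the all-zero sequence, since $1 \geq \lambda_1 \geq 0$ forces $\lambda_1 \in \{0,1\}$ and $\lambda_1 = 0$ collapses the whole chain to zero. So again there are $2^n - 1$. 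It therefore suffices to produce a single injection (say from threshold graphs to Betti diagrams, and from Betti diagrams to anti-lecture hall compositions) and let cardinality do the rest; or, more satisfyingly, to build explicit maps both ways and check they are mutually inverse.

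The key step is the bijection between noncomplete threshold graphs on $n+1$ vertices and Betti diagrams of $S/I$ for $2$-linear $I$. Injectivity is already in hand: Corollary~\ref{cor:threshold} says distinct threshold graphs on a fixed vertex set have distinct Betti diagrams. For surjectivity, I would invoke Theorem~\ref{thm:froberg} (so $\mathbf{k}[G]$ is a $2$-linear $S$-algebra whenever $G$ is chordal, in particular threshold) together with the following realization argument: given a $2$-linear ideal $I$ with $\beta_{0,0}(S/I) = 1$, Remark~\ref{rem:lattice} places $\omega(S/I)$ as a lattice point in the simplex spanned by the rows of $\Omega$, and Theorem~\ref{thm:BS} expresses it with coefficients $c = \omega(S/I)\Omega^{-1} \in \mathbb{Q}_{\geq 0}^n$ summing to $1$. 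One then shows that every such lattice point is realized by some threshold graph. The cleanest route is recursive, mirroring the $-^\ast$ / $-_\ast$ construction of threshold graphs via the affine maps in Proposition~\ref{prop:threshold}: the two operations $\omega \mapsto [\,\omega \mid 0\,]$ and $\omega \mapsto \omega\Lambda + \eta_n$ generate, starting from the single-vertex graph, exactly the reduced Betti vectors of all threshold graphs on $n+1$ vertices, and one argues by induction on $n$ that these exhaust the lattice points of the simplex (equivalently, exhaust the admissible $c$'s).

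For the third side, I would pass from a Betti diagram to its coefficient vector $c = \omega(S/I)\Omega^{-1}$ and define the associated composition by $\lambda = (\lambda_1,\dots,\lambda_n)$ with a suitable affine/partial-sum reindexing of $c$ (the natural guess is $\lambda_i = i\sum_{k \geq i} c_k$ or a close variant, chosen so that $c_k \geq 0$ translates precisely into $\lambda_k/k \geq \lambda_{k+1}/(k+1)$ and $\sum_k c_k = 1$ translates into $\lambda_1 = 1$); then check that the anti-lecture-hall inequalities $1 \geq \lambda_1/1 \geq \cdots \geq \lambda_n/n \geq 0$ hold if and only if $c \in \mathbb{Q}_{\geq 0}^n$ with $\sum c_k = 1$, and that integrality of $\lambda$ corresponds to integrality of the Betti numbers. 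Composing with the threshold-graph bijection closes the triangle.

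The main obstacle is the surjectivity/realizability claim in the middle step: showing that \emph{every} lattice point of the $\Omega$-simplex (equivalently every admissible integer $c$) actually comes from a threshold graph, not merely from a fictitious nonnegative diagram. I expect this to require the recursive argument via Proposition~\ref{prop:threshold}, carefully tracking that the two affine operations, applied to the full set of threshold Betti vectors on $n$ vertices, produce precisely (no more, no less) the lattice points in dimension $n$; the bookkeeping that the images of $[\,\cdot\mid 0\,]$ and $\cdot\,\Lambda + \eta_n$ are disjoint and jointly exhaustive — which is really the statement that the simplex decomposes compatibly with these maps — is the crux, and is where the explicit form of $\Lambda$, $\eta_n$, and $\Omega^{-1}$ from the lemma must be used.
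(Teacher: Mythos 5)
Your first route --- counting $2^n-1$ objects at each end, injecting threshold graphs into Betti diagrams via Corollary~\ref{cor:threshold}, and injecting Betti diagrams into anti-lecture hall compositions via the coefficient change $\lambda_i = i\sum_{k\geq i} c_k$ (whose inverse $c_i = \lambda_i/i - \lambda_{i+1}/(i+1)$ is exactly the paper's unimodular substitution $\omega(S/I) = \lambda\Psi$) --- is precisely the paper's proof. The ``surjectivity/realizability obstacle'' you flag in your closing paragraph is moot: the cardinality sandwich already forces every one of these injections to be a bijection, so no recursive realization argument via Proposition~\ref{prop:threshold} is required.
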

 
\begin{proof}   By Lemma \ref{lem:thresholdCount} and Corollary \ref{cor:threshold}, there are $2^{n}-1$ noncomplete 
threshold graphs on $n+1$ vertices, each of which corresponds to a distinct Betti diagram.  It suffices to show that the Betti diagrams of quotients of $S$ by $2$-linear ideals inject into the anti-lecture hall compositions of length $n$ with $\lambda_1 = 1$, since by Theorem \ref{thm:ALHP}, there are exactly $2^n-1$ of them.  

Let $I$ be a 2-linear ideal in $S$ and let $\Psi$ be the unimodular matrix with $ij$--entry equal to ${ i-1 \choose j-1}$.  Then there exists $\lambda = [\lambda_1, \dots, \lambda_{n}] \in \mathbb{Z}^{n}$ such that $\omega(S/I) = \lambda \Psi$.  By Theorem \ref{thm:BS}, $\lambda \Psi \Omega^{-1}
  = [c_1,...,c_{n}] \in \mathbb{Q}_{\geq 0}^{n}$ such that $\sum\limits_{i=1}^{n} c_i = 1$.  
 We leave it to the reader to verify that $\Psi \cdot \Omega^{-1}$ has $ij$--entry $1/i$ if $i = j$, $-1/i$ if $i = j + 1$, and $0$ otherwise.  
   Thus, $\displaystyle c_i = \frac{\lambda_i}{i} - \frac{\lambda_{i+1}}{i+1}$ for all $i \in [n-1]$ 
   and $\displaystyle c_{n} = \frac{\lambda_{n}}{n}$.  In particular, we get that $$\displaystyle 1 = 
   \sum\limits_{i=1}^{n} c_i = \frac{\lambda_1}{1} \geq \frac{\lambda_2}{2} \geq ... 
   \geq \frac{\lambda_{n}}{n} = c_{n} \geq 0$$ and hence, $\lambda$ is an anti-lecture hall composition with $\lambda_1 = 1$. \end{proof}


The first part of our main theorem is a simple corollary of Proposition \ref{prop:bijection}.  In particular, it asserts that the injection in Proposition \ref{prop:threshold} is in fact a bijection.
 
\begin{theorem}[Main Theorem, Part 1]\label{thm:alg}
For every $2$-linear ideal $I$ in $S$, there is a unique threshold graph $T$ on $n+1$ vertices with $\beta(S/I) = \beta(\mathbf{k}[T])$.
\end{theorem}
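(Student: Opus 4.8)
The plan is to derive Theorem \ref{thm:alg} directly from the bijective correspondence established in Proposition \ref{prop:bijection}, together with the earlier structural results on threshold graphs. First I would observe that Theorem \ref{thm:froberg} guarantees that for every $2$-linear ideal $I$ in $S$, the Betti diagram $\beta(S/I)$ has the shape displayed in Section 3, so it is completely determined by its reduced Betti vector $\omega(S/I)$, and conversely $\beta(S/I)$ can be recovered from $\omega(S/I)$. Thus it suffices to show that the map $T \mapsto \omega(\mathbf{k}[T])$ from noncomplete threshold graphs on $n+1$ vertices to reduced Betti vectors of quotients by $2$-linear ideals is a bijection (the complete graph being handled separately, since $\mathbf{k}[K_{n+1}] = R$ and the corresponding ``ideal'' is zero, which we have agreed to ignore).

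The key steps, in order, are as follows. Step one: injectivity of $T \mapsto \beta(\mathbf{k}[T])$ on threshold graphs with a fixed number of vertices is exactly Corollary \ref{cor:threshold}. Step two: there are precisely $2^n - 1$ noncomplete threshold graphs on $n+1$ vertices by Lemma \ref{lem:thresholdCount}, so the image of this map is a set of exactly $2^n - 1$ distinct Betti diagrams. Step three: every such Betti diagram really does come from a quotient by a $2$-linear ideal, since $\mathbf{k}[T] = R/I^c(T)$ and $I^c(T)$ is $2$-linear by Theorem \ref{thm:froberg} (threshold graphs are chordal). Step four: by Proposition \ref{prop:bijection}, the set of Betti diagrams of quotients of $S$ by $2$-linear ideals is in bijection with the anti-lecture hall compositions of length $n$ with $\lambda_1 = 1$, of which there are exactly $2^n - 1$ by Theorem \ref{thm:ALHP}. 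Combining steps two and four, the injection $T \mapsto \beta(\mathbf{k}[T])$ lands in a set of the same finite cardinality $2^n - 1$, hence is a bijection. This gives existence and uniqueness of the threshold graph $T$ with $\beta(S/I) = \beta(\mathbf{k}[T])$ for every $2$-linear ideal $I$.

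The main subtlety — not really an obstacle, but the point requiring care — is the bookkeeping around the complete graph and the empty ideal. One must be consistent about whether $\beta(S/I)$ for $I = 0$ (equivalently $S$ itself, or $\mathbf{k}[K_{n+1}]$) is counted among the ``Betti diagrams of quotients by $2$-linear ideals''; the paper's convention is to exclude it, matching the exclusion of $K_{n+1}$ from the threshold graphs and the condition $\lambda_1 = 1$ (rather than $\lambda_1 \le 1$) on the anti-lecture hall side. As long as these three exclusions are made in parallel, the three sets have common cardinality $2^n - 1$ and the counting argument closes cleanly. A secondary point is that $\beta(S/I)$ is determined by $\omega(S/I)$, which is immediate from the $2$-linear shape of the diagram, so passing between Betti diagrams and reduced Betti vectors is harmless. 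I would conclude by remarking, as the paper does, that this identifies the injection of Proposition \ref{prop:threshold} (more precisely, the injection implicit in Corollary \ref{cor:threshold}) as a genuine bijection, and that combined with Theorem \ref{thm:threshold} it shows every chordal graph on $n+1$ vertices shares its Betti diagram with exactly one threshold graph on $n+1$ vertices.
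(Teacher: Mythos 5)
Your proposal is correct and follows essentially the same route as the paper: the paper derives Theorem \ref{thm:alg} as an immediate corollary of Proposition \ref{prop:bijection}, whose proof is exactly your counting argument (injectivity from Corollary \ref{cor:threshold}, the count $2^n-1$ from Lemma \ref{lem:thresholdCount}, and the injection into anti-lecture hall compositions with $\lambda_1=1$ counted by Theorem \ref{thm:ALHP}). You have merely unpacked the same argument in more detail, including the careful parallel exclusion of the complete graph, the zero ideal, and $\lambda_1\le 1$.
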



\begin{remark} For a given $2$-linear ideal $I$ in $S$, it is easy to construct the graph $T$ realizing its Betti diagram. \end{remark}

\begin{example}\label{ex:alg}
To illustrate Theorem \ref{thm:alg} at work, consider the ideal $$I = \langle x_1^2, x_1x_2, x_1x_3, x_1x_4, x_2^2, x_1x_5+x_2x_4, x_4^2 \rangle \subseteq S = \mathbf{k}[x_1,\dots,x_5].$$  Then $$\beta(S/I) = \begin{bmatrix} 1 & \cdot &  \cdot &  \cdot & \cdot & \cdot \\  \cdot & 7 & 11 & 6 & 1 & 0 \end{bmatrix}.$$
In order to find a threshold graph $T$ on six vertices whose Betti diagram is $\beta(S/I)$, we sequentially apply the inverses of the affine transformations in Proposition \ref{prop:threshold} depending on whether or not the sequences end in $0$.  (We leave it to the reader to verify that the inverse of $\Lambda$ in Proposition \ref{prop:threshold} is the $n \times (n-1)$-matrix whose $(i,j)$ position is $(-1)^{i+j}$ if $i \leq j$ and $0$ otherwise.)
$$[7,11,6,1,0] \overset{-^*}{\longrightarrow} [7,11,6,1]  \overset{-_*}{\longrightarrow} [3,2,0] \overset{-^*}{\longrightarrow}  [3,2]  \overset{-_*}{\longrightarrow} [1] \overset{-_*}{\longrightarrow} [0] $$
From this, we see that $\beta(S/I) = \beta(\mathbf{k}[T])$ where $T$ is the threshold graph with sequence $((((-_*)_*)^*)_*)^*$ drawn in Figure \ref{fig:ex}.
\end{example} 

\begin{figure} [ht]
\begin{center}\scalebox{1.55}{\includegraphics{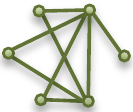}} \end{center}
\caption{The threshold graph $T$ on six vertices with $\omega(\mathbf{k}[T]) = [7,11,6,1,0]$.}\label{fig:ex}
 \end{figure} 

For the rest of the paper, we take a more geometric approach.  Specifically, we make use of the fact (Remark \ref{rem:lattice}) that the reduced Betti vectors of these diagrams are lattice points in the $(n-1)$-dimensional lattice simplex $P_n$ spanned by the row vectors of $\Omega$.  Illustrations of $P_1$ through $P_4$, labeled by reduced Betti vectors, Boij-S\"oderberg coefficients, truncated coordinates (see Section 5), and corresponding chordal graphs are shown in Figures \ref{fig:P_1} through $\ref{fig:P_4}$ with the threshold graphs colored dark green.  Notice that each $P_n$ contains two copies of $P_{n-1}$, colored blue and red, corresponding to Parts (1) and (2) of Proposition \ref{prop:threshold}, respectively.   

\begin{figure}[ht] \begin{minipage}[b]{0.45\linewidth} \begin{center} \includegraphics[width=0.1\textwidth]{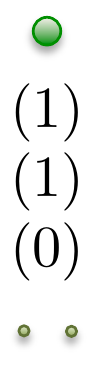}  \end{center}  \caption{The lattice polytope $P_1$.}\label{fig:P_1} \end{minipage}  \hspace{0.1cm}
\begin{minipage}[b]{0.5\linewidth} \begin{center}   \includegraphics[width=5cm]{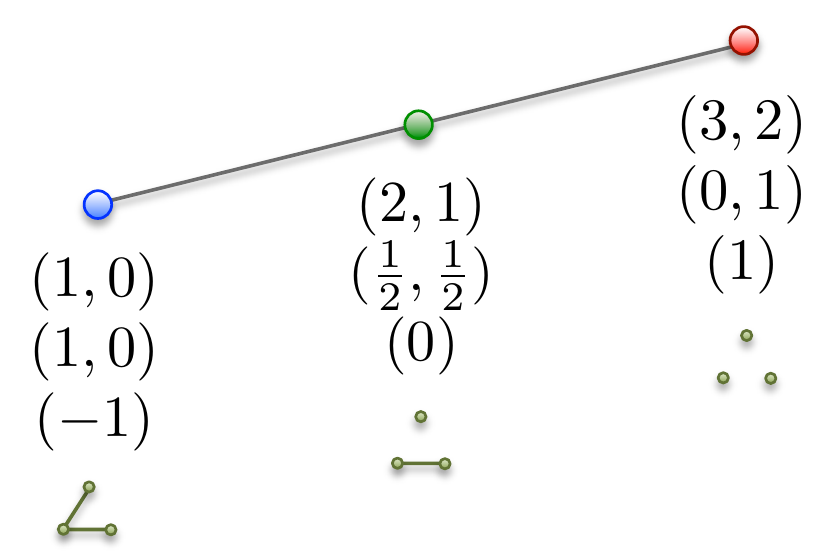}  \end{center} \caption{The lattice polytope $P_2$.}\label{fig:P_2} \end{minipage} \end{figure}

 \begin{figure}[ht] \begin{center} \includegraphics[width=8.75cm]{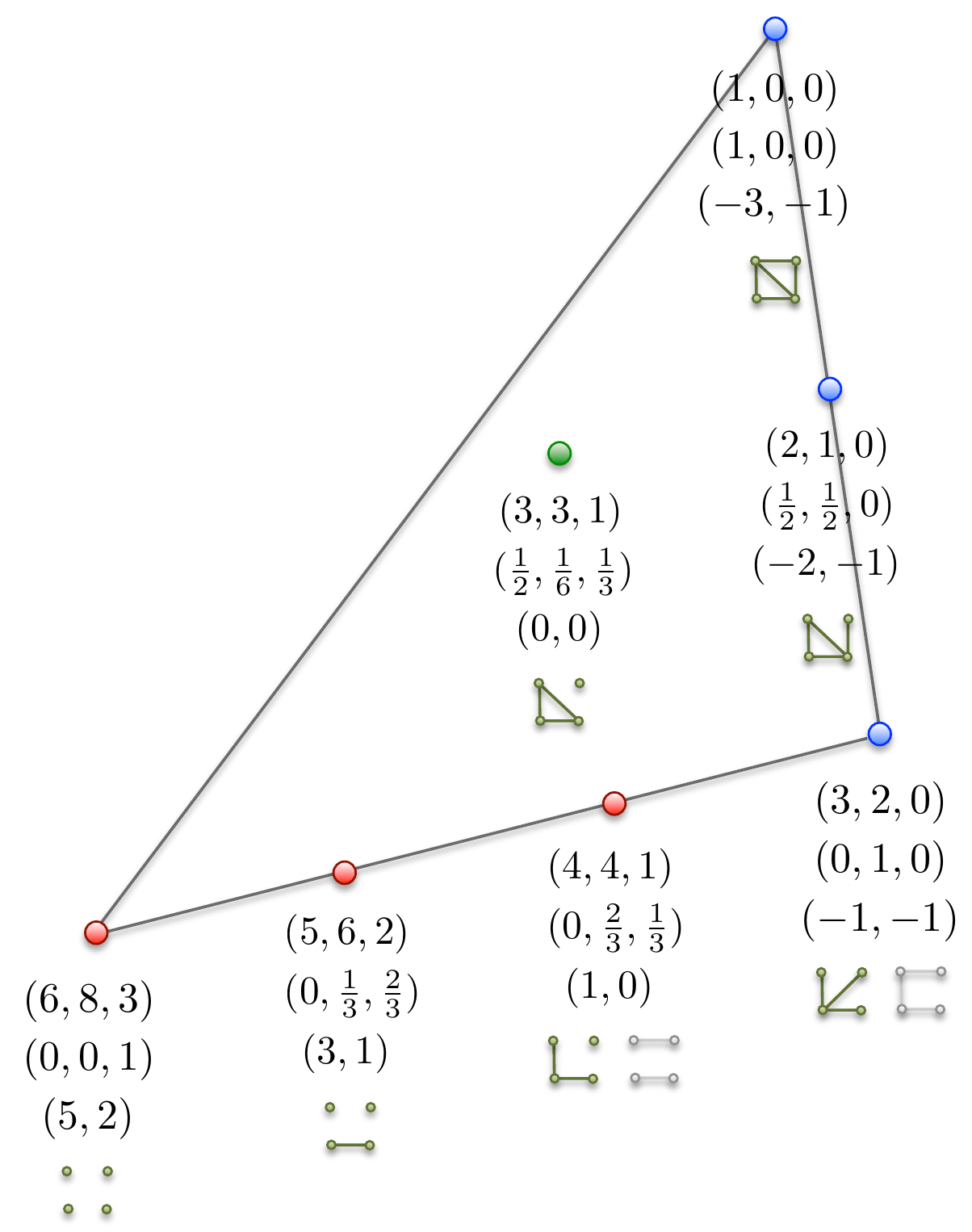} \end{center}  \caption{The lattice polytope $P_3$.}\label{fig:P_3} \end{figure}

 \begin{figure}[ht] \begin{center}  \includegraphics[width=12cm]{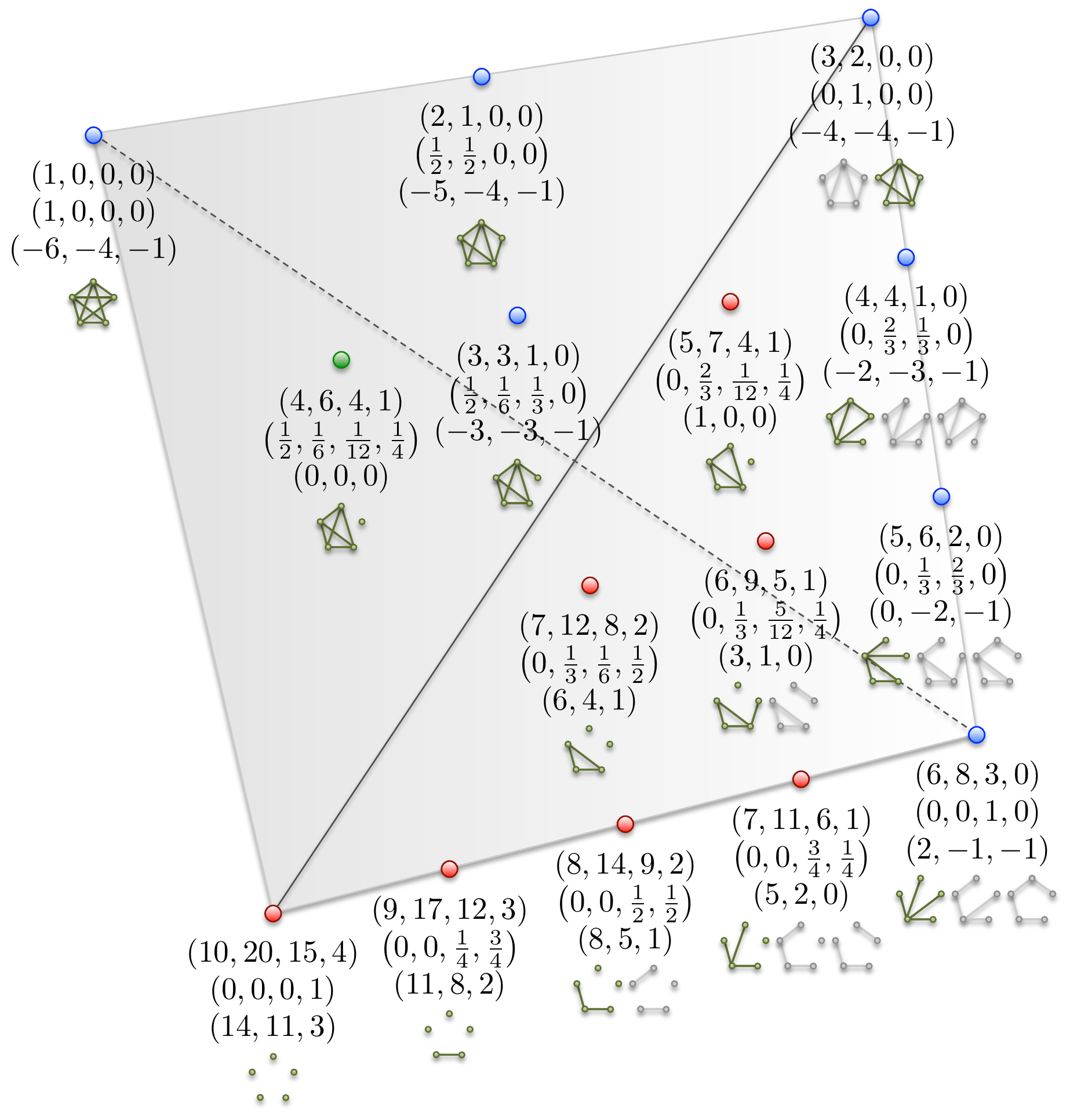} \end{center} \caption{The lattice polytope $P_4$.}\label{fig:P_4} \end{figure}

We continue with some standard definitions from discrete geometry.  The integer points $\mathbb{Z}^d \subseteq \mathbb{R}^d$ form a \emph{lattice}. The integer points of a polytope are its \emph{lattice points} and a polytope is called a \emph{lattice polytope} if all its vertices are lattice points.  For a polytope $P$ with vertices $\{v_1,\dots,v_s\}$ and $t \in \mathbb{N}$, let $t P$ denote  the $t^{\text{th}}$ \emph{dilation} of $P$, i.e. the polytope attained by taking the convex hull of the points $\{t \cdot v_1, \dots, t \cdot v_s\}$, let $S_P \subseteq \mathbb{Z}^{d+1}$ denote the semigroup generated by $\{ [ 1,p_1,\dots, p_d  ] \ : \ (p_1,\dots, p_d) \in P \cap \mathbb{Z}^d \}$, and let $\text{gp}(S_P)$ be the smallest group containing $S_P$, i.e. the group of differences in $S_P$.  We say $P$ is \emph{normal} if $x \in \text{gp}(S_P)$ such that $s \cdot x \in S_P$ for some $s \in \mathbb{N}$ implies that $x \in S_P$.  We refer to \cite{barvinok2002,BGT} for questions on lattice polytopes.

\begin{proposition}\label{prop:normal}
The lattice simplex $P_n$ is normal for each $n \in \mathbb{N}$.
\end{proposition}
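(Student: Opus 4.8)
The plan is to transport the statement to the anti-lecture hall model from Section~4.3 and then produce an explicit decomposition by hand. Recall from the proof of Proposition~\ref{prop:bijection} that the unimodular matrix $\Psi$ with $ij$-entry $\binom{i-1}{j-1}$ has the property that, for $\omega\in\mathbb{R}^n$ and $\lambda:=\omega\Psi^{-1}$, the vector $c:=\omega\Omega^{-1}=\lambda\,\Psi\Omega^{-1}$ has coordinates $c_i=\lambda_i/i-\lambda_{i+1}/(i+1)$ for $i<n$ and $c_n=\lambda_n/n$. Since $P_n=\{\omega\in\mathbb R^n : \omega\Omega^{-1}\ge 0,\ \sum_i(\omega\Omega^{-1})_i=1\}$ (this being what it means to be a convex combination of the rows of $\Omega$), the lattice automorphism $\omega\mapsto\omega\Psi^{-1}$ of $\mathbb Z^n$ carries $P_n$ onto the anti-lecture hall simplex
\[
Q_n=\bigl\{\lambda\in\mathbb R^n : \lambda_1=1,\ \tfrac{\lambda_1}{1}\ge\tfrac{\lambda_2}{2}\ge\cdots\ge\tfrac{\lambda_n}{n}\ge 0\bigr\}=\operatorname{conv}(\lambda^1,\dots,\lambda^n),\qquad \lambda^k:=\textstyle\sum_{i=1}^{k}i\,e_i.
\]
As unimodular lattice isomorphisms induce isomorphisms of the semigroups $S_{P}$ and hence preserve normality, it suffices to prove $Q_n$ is normal; in fact I will establish the integer decomposition property: for every $t\ge 1$, each lattice point of the dilate $tQ_n$ is a sum of $t$ lattice points of $Q_n$. (That this implies normality in the sense of the definition above is standard.)

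So fix $t\ge 1$ and $\lambda\in tQ_n\cap\mathbb Z^n$; unravelling the dilate, this says $\lambda\in\mathbb Z^n$, $\lambda_1=t$, and $t=\tfrac{\lambda_1}{1}\ge\tfrac{\lambda_2}{2}\ge\cdots\ge\tfrac{\lambda_n}{n}\ge 0$, so $0\le\lambda_i\le ti$ for all $i$. For $1\le j\le t$ set
\[
\mu^{(j)}_i:=\min\bigl(i,\ \max(0,\ \lambda_i-(j-1)i)\bigr)\in\mathbb Z,\qquad 1\le i\le n.
\]
I claim $\lambda=\mu^{(1)}+\cdots+\mu^{(t)}$ with every $\mu^{(j)}\in Q_n\cap\mathbb Z^n$. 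For the sum: for fixed $i$ the numbers $\mu^{(j)}_i$ are the clampings to $[0,i]$ of the arithmetic progression $\lambda_i,\lambda_i-i,\dots,\lambda_i-(t-1)i$, and a direct check using $0\le\lambda_i\le ti$ shows they add up to $\lambda_i$. For membership: dividing the definition through by $i$ gives $\mu^{(j)}_i/i=\min\bigl(1,\max(0,\lambda_i/i-(j-1))\bigr)$, which is a non-decreasing function of the ratio $\lambda_i/i$; since $i\mapsto\lambda_i/i$ is non-increasing, so is $i\mapsto\mu^{(j)}_i/i$, it is clearly $\ge 0$, and at $i=1$ it equals $\min(1,\max(0,t-j+1))=1$ because $1\le j\le t$. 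Hence $1=\mu^{(j)}_1/1\ge\mu^{(j)}_2/2\ge\cdots\ge\mu^{(j)}_n/n\ge 0$, i.e.\ $\mu^{(j)}\in Q_n$, completing the argument.

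The only real subtlety is picking the decomposition: splitting each coordinate of $\lambda$ ``as evenly as possible'' among the $t$ pieces fails, because a piece forced to carry a large share of coordinate $i$ needs a correspondingly large share of coordinate $i+1$ to stay anti-lecture hall -- for example $(3,3,4)$ has no decomposition into three pieces of the form $(1,1,\ast)$ but does decompose as $(1,2,3)+(1,1,1)+(1,0,0)$. The clamping formula sidesteps this by instead slicing the profile $i\mapsto\lambda_i/i$ into the parts lying in the successive unit bands $[j-1,j]$, which keeps each slice non-increasing for free, and the cuts occur at the integers $(j-1)i$, so integrality of $\lambda$ is not destroyed. Transporting the decomposition back through $\Psi$ then writes any lattice point of $tP_n$ as a sum of $t$ reduced Betti vectors of $2$-linear algebras, which is exactly what one needs to pass from Theorem~\ref{thm:alg} to its module analogue.
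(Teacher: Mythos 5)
Your argument is correct and follows the same overall route as the paper's proof: transport $P_n$ to the anti-lecture hall simplex $Q_n$ via the unimodular matrix $\Psi$ from Proposition \ref{prop:bijection} and establish normality there. The difference lies in how the essential step is handled. The paper asserts that $S_{Q_n}$ is the set of all anti-lecture hall compositions --- which is exactly the integer decomposition property you prove --- and offers no justification beyond ``it is clear''; your clamping construction $\mu^{(j)}_i=\min\bigl(i,\max(0,\lambda_i-(j-1)i)\bigr)$ supplies an explicit witness for that assertion, and your verifications (that the slices sum to $\lambda$, that each slice is integral, and that $i\mapsto\mu^{(j)}_i/i$ is non-increasing with value $1$ at $i=1$) are all correct. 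Your observation that the naive ``even split'' fails, e.g.\ for $(3,3,4)$, is a fair indication that this step has genuine content, so your write-up is more complete than the paper's at precisely the point the paper glosses over. The surrounding reductions --- that the integer decomposition property implies normality in the sense of the paper's definition, and that $\omega\mapsto\omega\Psi^{-1}$ carries $P_n$ onto $Q_n$ --- are also handled correctly and agree with the paper.
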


\begin{proof}
It is straightforward to check that the anti-lecture hall compositions of length $n$ bounded above by $1$ are the lattice points of the $n$-dimensional lattice simplex spanned by $(0, \dots, 0)$ and the compositions $\lambda^l = (1,2,\dots,l,0,\dots,0)$ for $l \in [n]$.  Let $Q_n$ be the facet spanned by the $\lambda^l$'s.  Since normality is preserved under unimodular transformations, we prove that $Q_n$ is normal and apply $\Psi$ from the proof of Proposition \ref{prop:bijection}.

To begin, we must truncate the coordinates of $Q_n$ since it is an $(n-1)$-dimensional simplex.  Removing the first coordinate yields the simplex with vertices $(0,\dots,0)$ and $(2,3,\dots,l,0,\dots,0)$ for $l \in [n]$.  Then $S_{Q_n}$ is the set of all anti-lecture hall compositions and $\text{gp}(S_{Q_n}) = \mathbb{Z}^n$.  From here it is clear that if $\lambda \in \mathbb{Z}^n$ and $s \cdot \lambda \in S_{Q_n}$ for some $s \in \mathbb{N}$, then $\lambda \in S_{Q_n}$.  Hence, $Q_n$ is normal.
\end{proof}

A convenient consequence of normality is that every lattice point in the $t^{\text{th}}$ dilation of a normal polytope $P$ can be written as a sum of $t$, not necessarily distinct, lattice points in $P$.  With that, we can prove the second part of our main theorem.  

\begin{theorem}[Main Theorem, Part 2] \label{thm:module}
For every finitely-generated, graded $S$-module, $M$, with a $2$-linear minimal free resolution and $\beta_{0,0}(M) = m$, there exists a collection of $m$ threshold graphs $\{T_1,\dots,T_m\}$, not necessarily distinct, such that $\beta(M) =  \beta(\mathbf{k}[T_1] \oplus \cdots \oplus \mathbf{k}[T_m]).$ 
\end{theorem}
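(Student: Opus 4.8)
The plan is to read off $\omega(M)$ as a lattice point of the $m$-th dilation of the simplex $P_n$, break that point into $m$ lattice points of $P_n$ using normality, and recognize each of those as the reduced Betti vector of a threshold graph. To start, I would apply Theorem~\ref{thm:BS}: since $M$ has a $2$-linear minimal free resolution with $\beta_{0,0}(M) = m$, we may write $\omega(M) = c_1\omega^1 + \cdots + c_n\omega^n$ with $c \in \mathbb{Q}_{\geq 0}^n$ and $c_1 + \cdots + c_n = m$, so $\omega(M)$ is a lattice point of $mP_n$ (generalizing Remark~\ref{rem:lattice}). Since $P_n$ is normal by Proposition~\ref{prop:normal}, the consequence of normality recalled just above the theorem yields lattice points $\mu_1,\dots,\mu_m$ of $P_n$ with $\omega(M) = \mu_1 + \cdots + \mu_m$; and because $\Omega$ is invertible, $0 \notin P_n$, so each $\mu_j$ is nonzero.

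Next I would identify the nonzero lattice points of $P_n$ with noncomplete threshold graphs on $n+1$ vertices. By Remark~\ref{rem:lattice} and Theorem~\ref{thm:froberg}, the reduced Betti vector $\omega(\mathbf{k}[T])$ of any noncomplete threshold graph $T$ on $n+1$ vertices is a nonzero lattice point of $P_n$, and by Corollary~\ref{cor:threshold} the assignment $T \mapsto \omega(\mathbf{k}[T])$ is injective. On the other hand $P_n$ has exactly $2^n-1$ lattice points: the unimodular matrix $\Psi$ from the proof of Proposition~\ref{prop:bijection} maps the anti-lecture hall simplex $Q_n$ onto $P_n$ (this is what is used in the proof of Proposition~\ref{prop:normal}) and hence carries the lattice points of $Q_n$ bijectively onto those of $P_n$, and $Q_n$ has $2^n-1$ lattice points, namely the anti-lecture hall compositions of length $n$ bounded above by $1$ with $\lambda_1 = 1$ (Theorem~\ref{thm:ALHP}). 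Since there are also $2^n-1$ noncomplete threshold graphs on $n+1$ vertices (Proposition~\ref{prop:bijection}), the injection above is a bijection, so each $\mu_j = \omega(\mathbf{k}[T_j])$ for a unique noncomplete threshold graph $T_j$ on $n+1$ vertices, with the $T_j$ allowed to repeat when the $\mu_j$ coincide.

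Finally, graded Betti numbers are additive over direct sums, so $\mathbf{k}[T_1] \oplus \cdots \oplus \mathbf{k}[T_m]$ has a $2$-linear resolution with $\beta_{0,0} = m$ and $\omega(\mathbf{k}[T_1] \oplus \cdots \oplus \mathbf{k}[T_m]) = \sum_{j} \omega(\mathbf{k}[T_j]) = \sum_j \mu_j = \omega(M)$; since a $2$-linear Betti diagram is determined by $\beta_{0,0}$ and its reduced Betti vector, this gives $\beta(M) = \beta(\mathbf{k}[T_1] \oplus \cdots \oplus \mathbf{k}[T_m])$. The real engine, normality of $P_n$, is already in hand from Proposition~\ref{prop:normal}; within this argument the only step requiring genuine care is confirming that the threshold-graph vectors exhaust \emph{all} lattice points of $P_n$ rather than only a subset — equivalently, that $\Psi$ carries the anti-lecture hall simplex onto $P_n$ — because the decomposition produced by normality hands us an arbitrary lattice point of $P_n$ at each summand, and each of those must be realized by an actual graph. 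Everything else is bookkeeping.
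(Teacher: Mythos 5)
Your proposal is correct and follows essentially the same route as the paper: Theorem~\ref{thm:BS} places $\omega(M)$ in $mP_n$, normality (Proposition~\ref{prop:normal}) splits it into $m$ lattice points of $P_n$, each of which is $\omega(\mathbf{k}[T_i])$ for a threshold graph $T_i$, and additivity of Betti numbers over direct sums finishes the argument. The only difference is that you spell out the identification of the lattice points of $P_n$ with noncomplete threshold graphs via the $\Psi$-image of the anti-lecture hall simplex, a step the paper compresses into a citation of Theorem~\ref{thm:alg}.
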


\begin{proof}
By Theorem \ref{thm:BS}, $\omega(M)$ is a lattice point in $m P_n$ and is a sum of $m$ lattice points $p_1$, \dots , $p_m$ in $P_n$, by Proposition \ref{prop:normal}.  Applying Theorem \ref{thm:alg} yields a threshold graph $T_i$ such that $p_i = \omega(\mathbf{k}[T_i])$ for each $i \in [m]$, and thus, $$\beta(M) = \beta(T_1) + \cdots +\beta(T_m) = \beta(\mathbf{k}[T_1] \oplus \cdots \oplus \mathbf{k}[T_m]).$$
\end{proof}

\begin{remark}  
The decomposition in Theorem \ref{thm:module} is often not unique.  So in the more general setting of modules, we do not know how to construct the family of trees representing a given Betti diagram as we do in the special case of algebras, see Theorem \ref{thm:alg} and Example \ref{ex:alg}.
\end{remark}

\section{The geometry of $P_n$ and $Q_n$}

In the previous section, we used the geometry of the lattice simplex $P_n$ of reduced Betti vectors of $2$-linear ideals in $S$ (or equivalently, the lattice simplex $Q_n$ of nonzero anti-lecture hall compositions of length $n$) to prove algebraic statements about Betti diagrams of algebras and modules with $2$-linear resolutions, but these polytopes have many other beautiful geometric properties which make them interesting on their own.  In this section, we take the opportunity to showcase a few of these properties.  Specifically, we remark that $P_n$ has a simple Ehrhart polynomial, due to Corteel, Lee, and Savage, and we prove that $P_n$ is reflexive.




Given a $d$-dimensional polytope $P$, let $Ehr_{P}(t)$ denote the number of lattice points in $t P$.  It is well-known that $Ehr_{P}(t)$ is a degree $d$ polynomial in $t$, called the \emph{Ehrhart polynomial} of $P$, with constant term $1$ and leading coefficient equal to the volume of $P$, and that Ehrhart polynomials are preserved under unimodular transformations.   For an introduction to Ehrhart theory, see \cite{BR}. 

\begin{theorem} For every $n,t \in \mathbb{N}$,
$Ehr_{P_n}(t) = Ehr_{Q_n}(t) = (t+1)^{n} - t^{n}$. 
\end{theorem}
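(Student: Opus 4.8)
The plan is to compute the Ehrhart polynomial of $P_n$ by exploiting the identification (via Proposition \ref{prop:bijection} and Proposition \ref{prop:normal}) of its lattice points with anti-lecture hall compositions, and then to read off the dilation as a bounded-above parameter. By Theorem \ref{thm:ALHP}, the number of anti-lecture hall compositions of length $n$ bounded above by $t$ is $(t+1)^n$. Since $Ehr_{P_n}(t) = Ehr_{Q_n}(t)$ by unimodular invariance of Ehrhart polynomials (applying $\Psi$ as in Proposition \ref{prop:normal}), it suffices to understand $t Q_n$.

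First I would observe that $Q_n$ sits as a facet of the $n$-dimensional simplex spanned by the origin and the compositions $\lambda^l = (1,2,\dots,l,0,\dots,0)$, whose lattice points are exactly \emph{all} anti-lecture hall compositions of length $n$ bounded above by $1$ (as shown in the proof of Proposition \ref{prop:normal}); the facet $Q_n$ itself consists of the \emph{nonzero} ones, i.e. those with $\lambda_1 = 1$. Next I would identify the lattice points of $t Q_n$. Dilating $Q_n$ by $t$ rescales the vertices $\lambda^l$ to $t\lambda^l = (t, 2t, \dots, lt, 0, \dots, 0)$, and a point $\mu$ lies in $t Q_n \cap \mathbb{Z}^n$ precisely when $\mu$ is a nonnegative integer combination $\sum_l a_l (t\lambda^l)$ with $\sum_l a_l \le t$... more usefully, using normality of $Q_n$ (Proposition \ref{prop:normal}), $t Q_n \cap \mathbb{Z}^n$ is exactly the set of sums of $t$ lattice points of $Q_n$, which unwinds to the set of anti-lecture hall compositions of length $n$ with $\lambda_1 = t$. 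Equivalently, these are the anti-lecture hall compositions of length $n$ bounded above by $t$ having $\lambda_1$ exactly equal to $t$, which are in bijection (by subtracting off appropriate amounts, or directly) with those bounded above by $t$ but with $\lambda_1 < t$ removed. Concretely: anti-lecture hall compositions of length $n$ bounded above by $t$ with $\lambda_1 = t$ are in bijection with anti-lecture hall compositions of length $n-1$... no — cleanest is to count directly: the number of length-$n$ anti-lecture hall compositions bounded above by $t$ is $(t+1)^n$, those with $\lambda_1 \le t-1$ are precisely those bounded above by $t-1$ (since the chain of inequalities $t \ge \lambda_1 \ge \lambda_2/2 \ge \cdots$ with $\lambda_1 \le t-1$ forces all the constraints of being bounded above by $t-1$, and conversely), so the number with $\lambda_1 = t$ is $(t+1)^n - t^n$.

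Assembling these, $Ehr_{Q_n}(t) = \#\{$length-$n$ anti-lecture hall compositions bounded above by $t$ with $\lambda_1 = t\} = (t+1)^n - t^n$, and hence $Ehr_{P_n}(t) = (t+1)^n - t^n$ as well. As a sanity check, this polynomial has degree $n-1$ (matching $\dim P_n = n-1$), constant term $1$ (at $t=0$, only the origin... wait, $P_n$ has no interior lattice point equal to the origin, but $Ehr_{P_n}(0) = 1$ as required for any Ehrhart polynomial since $0 P_n$ is a point), and leading coefficient $n$, which would be $(n-1)!$ times the normalized volume, so $P_n$ has volume $n/(n-1)!$.

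The main obstacle is pinning down precisely which anti-lecture hall compositions correspond to lattice points of the \emph{dilation} $t Q_n$ rather than of $Q_n$ or of the full $n$-simplex containing it. One must be careful that $Q_n$ is a lower-dimensional facet, so ``dilation'' happens inside its affine span, and the cleanest route is to invoke normality (Proposition \ref{prop:normal}) to say $t Q_n \cap \mathbb{Z}^n$ equals the $t$-fold sumset of $Q_n \cap \mathbb{Z}^n$, then verify combinatorially that this sumset is exactly the set of length-$n$ anti-lecture hall compositions with first part equal to $t$ — the ``$\subseteq$'' direction is immediate from adding inequalities, and the ``$\supseteq$'' direction requires a small greedy/inductive argument decomposing such a composition into $t$ pieces each with first part $1$. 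Once that combinatorial identity is in hand, the counting step ($(t+1)^n - t^n$) is routine.
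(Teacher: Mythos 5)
Your proof is correct and follows essentially the same route as the paper's: reduce to $Q_n$ by unimodular invariance, identify the lattice points of $tQ_n$ with the anti-lecture hall compositions of length $n$ having $\lambda_1 = t$, and count these as $(t+1)^n - t^n$ via Theorem \ref{thm:ALHP}. The paper does this directly (without the detour through normality, which is not needed here since membership in $tQ_n$ can be read off from the convex-combination description of the simplex), but the substance is the same.
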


\begin{proof}
Since the matrix $\Psi^{-1}$ in the proof of Proposition \ref{prop:bijection} is unimodular, we know that $Ehr_{P_n}(t) = Ehr_{Q_n}(t)$.  So, let $A_n(t)$ denote the number of anti-lecture hall compositions of length $n$ with $\lambda_1 \leq t$.  Theorem \ref{thm:ALHP} gives us $A_n(t) = (t+1)^n$.  Since every point in the $tQ_n$ satisfies, $\lambda_1 = t$, it follows immediately that  $$Ehr_{P_n}(t) = Ehr_{Q_n}(t) = A_n(t) - A_{n}(t-1) = (t+1)^n - t^n.$$ 
\end{proof}

Next, we prove that $P_n$ is reflexive.  For this, we need the concept of a dual (or polar) of a polytope, but restrict to the case of simplices, since those are the only polytopes we consider.  

\begin{definition}
Let the vertices of a $d$-simplex $P$ be recorded as the rows
of the $d \times (d-1)$ matrix $M$ and let $M^\ast$ be the
$(d-1) \times d$ matrix such that $MM^\ast$ has value -1
everywhere outside the diagonal. The $d$-simplex whose
vertices are the columns of $M^\ast,$ is the \emph{dual} $P^\ast$ of $P$.
\end{definition}

If $P$ is a lattice polytope containing 0 as an interior point such that $P^\ast$ is also lattice polytope, then $P$ and $P^{\ast}$ are called \emph{reflexive}.  These polytopes have several interesting properties and characterizations, for instance, a lattice polytope $P$ is reflexive if and only if its only interior lattice point is $0$ and if $u$ and $v$Ê are two lattice points on the boundary of $P$, then either $u$ and $v$ are
on the same facet, or $u+v$ is in $P$.  This is an important concept with interesting
connection to geometry and theoretical physics. For an exposition
suitable for researchers with a background in discrete
mathematics, we refer to Batyrev and Nill~\cite{batyrevNill2008}.

Because $P_n$ is an $(n-1)$-dimensional simplex with coordinates in $\mathbb{Z}^n$, for each lattice point $p \in P_n$, we define $$p_t = [p_1,\dots,p_{n-1}] := [p - \eta_n]_{2 \leq i \leq n} = \left[p_2 - \binom{n}{2},\dots,p_n-\binom{n}{n}\right]$$ to be the \emph{truncated coordinates} of $p$ in $P_n$. 

\begin{theorem}\label{thm:reflexive} The simplex $P_n$ realized in the truncated coordinates is a reflexive 
lattice polytope.
\end{theorem}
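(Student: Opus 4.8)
The plan is to compute the polar simplex $P_n^\ast$ explicitly. Since reflexivity is preserved by unimodular transformations, I would first pass to coordinates in which the arithmetic is transparent. The unimodular matrix $\Psi$ from the proof of Proposition \ref{prop:bijection} carries the vertices $\omega^1,\dots,\omega^n$ of $P_n$ to the compositions $\lambda^l=(1,2,\dots,l,0,\dots,0)$ spanning the facet $Q_n$ of Proposition \ref{prop:normal}, and it carries the interior point $\eta_n$ to $(1,\dots,1)$ — the latter being the hockey-stick identity $\sum_{i=1}^{n}\binom{i-1}{j-1}=\binom{n}{j}$. Deleting the first coordinate (as in the proof of Proposition \ref{prop:normal}) and translating by $-(1,\dots,1)$ so that the image of $\eta_n$ is the origin, $P_n$ in its truncated coordinates becomes unimodularly equivalent to the simplex $\Delta$ whose vertices are the rows
$$u^l=\bigl(1,2,\dots,l-1,\underbrace{-1,\dots,-1}_{n-l}\bigr),\qquad l=1,\dots,n,$$
of an $n\times(n-1)$ matrix $M$, i.e., $M_{lj}=j$ if $j<l$ and $M_{lj}=-1$ if $j\ge l$.

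Then I would record two easy facts about $\Delta$. First, taking $c_l=\tfrac1{l(l+1)}$ for $l<n$ and $c_n=\tfrac1n$, the telescoping identity $\sum_{l\le j}c_l=\tfrac{j}{j+1}$ gives $\sum_{l=1}^n c_l u^l=0$; as the $c_l$ are positive and sum to $1$, the origin is an interior point of $\Delta$. (This convex combination is precisely the Boij-S\"oderberg decomposition of $\mathbf{k}[G]$ for the graph $G$ of Example \ref{ex:eta}.) Second, the $u^l$ are integer vectors, so $\Delta$ — hence $P_n$ in its truncated coordinates — is a lattice polytope.

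Next I would solve for $M^\ast$. Writing $a_i$ for the $i$th column of $M^\ast$, the defining relations are $\langle u^l,a_i\rangle=-1$ for all $l\ne i$. With $f_i(l):=\langle u^l,a_i\rangle$, a one-line computation gives $f_i(l+1)-f_i(l)=(l+1)(a_i)_l$ for $1\le l\le n-1$; since $f_i$ is forced to equal $-1$ at every index $\ne i$, this kills $(a_i)_l$ for $l\notin\{i-1,i\}$, and the remaining one or two coordinates are pinned down by $f_i(1)=-\sum_j(a_i)_j=-1$ and $f_i(n)=\sum_j j(a_i)_j=-1$. The outcome is $a_1=(-1,0,\dots,0)$, $a_n=(0,\dots,0,1)$, and $a_i=(i+1)e_{i-1}-i\,e_i$ for $2\le i\le n-1$. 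Every entry of $M^\ast$ is an integer, so $P_n^\ast$ is a lattice polytope, and together with the previous paragraph this is exactly the statement that $P_n$, in its truncated coordinates, is reflexive. (Each $a_i$ is moreover primitive, so each facet of $P_n$ lies at lattice distance $1$ from the origin, the usual geometric characterization of reflexivity.)

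The polar computation is routine; the step requiring genuine care is the passage to the $Q_n$-picture. One must check that deleting the first coordinate is a lattice isomorphism from $\mathrm{aff}(Q_n)\cap\mathbb{Z}^n$ onto $\mathbb{Z}^{n-1}$, and that the truncation $p\mapsto[p-\eta_n]_{2\le i\le n}$ appearing in the statement is a lattice isomorphism on $\mathrm{aff}(P_n)$; the latter holds because the $\Psi$-image of the hyperplane $\{x_1=0\}$ is $\{x_1=x_2-x_3+x_4-\cdots\mp x_n\}$, which is the graph of an integer-valued affine function of the last $n-1$ coordinates, so the truncation is unimodular there. (Alternatively, one can bypass $Q_n$ and run the same argument directly on the vertices $[\omega^l-\eta_n]_{2\le i\le n}$, at the price of messier numbers.) I expect this bookkeeping, rather than the polar computation itself, to be the main obstacle.
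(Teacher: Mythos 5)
Your proposal is correct, and it takes a genuinely different route from the paper. The paper stays entirely in the truncated coordinates of the statement: it writes the vertex matrix as $\Omega_n=\Omega_n'-\eta_n\mathbf{1}_n$, guesses an explicit integer matrix $\Xi_n=\Xi_n'+\Xi_n''+\Xi_n'''$, and verifies by a six-term binomial computation that $\Omega_n\Xi_n$ has $-1$ off the diagonal, concluding that the dual is a lattice simplex. You instead transport the whole problem through $\Psi$ to the anti-lecture-hall picture, where the vertices become $u^l=(1,\dots,l-1,-1,\dots,-1)$ and the identity $u^{l+1}-u^l=(l+1)e_l$ makes the polar computation essentially mechanical, yielding the primitive normals $-e_1$, $(i+1)e_{i-1}-ie_i$, $e_{n-1}$. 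What your approach buys is a transparent dual and, as a bonus, an explicit verification that the origin (i.e.\ $\eta_n$) is interior via the convex combination with coefficients $\tfrac1{l(l+1)}$ and $\tfrac1n$ --- a hypothesis in the paper's own definition of reflexive that its proof leaves implicit. What it costs is exactly the bookkeeping you flag: one must check that forgetting the first coordinate is a lattice isomorphism on the affine hull, which holds because that hull is $\{\omega_1-\omega_2+\omega_3-\cdots=1\}$ (equivalently $\{\lambda_1=1\}$ after applying $\Psi^{-1}$), so the composite map to your $\Delta$ is affine unimodular and sends $0$ to $0$; the paper's in-place computation avoids this entirely. Two cosmetic slips, neither damaging: the map carrying $\eta_n$ to $(1,\dots,1)$ is $\Psi^{-1}$ rather than $\Psi$ in the paper's row-vector convention $\omega=\lambda\Psi$, and the relevant hyperplane is the affine one $\{\lambda_1=1\}$, not $\{x_1=0\}$. (Your diagonal entry $\langle u^n,a_n\rangle=n-1$ differs from the paper's claimed $n$; spot-checking $n=2$ suggests the paper's stated formulas for $\Xi_n$ contain indexing typos, so do not be alarmed by the mismatch.)
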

\begin{proof}
We begin by removing the left-most column of $\Omega$ to get the $n \times (n-1)$ matrix $\Omega'_{n}.$ Then the truncated coordinates of $P_n$ are the rows of $\Omega_{n} = \Omega'_{n} - \eta_{n} \mathbf{1}_{n}.$ More explicitly, the $ij$-entry of $\Omega'_{n}$ is $(j+1){i+1 \choose j+2}$ and the $j$ entry 
of $\eta_{n}$ is ${n \choose j+1}.$

The dual of $P_n$, in truncated coordinates, is the simplex whose vertices are 
the columns of the $(n-1)\times (n)$ matrix $\Xi_{n}$ satisfying that all values of
 $\Omega_{n}\Xi_{n}$ outside the diagonal is $-1.$ If all entries of $\Xi_n$ are integers, 
 then the dual of $P_n$ is a lattice polytope and hence, $P_n$ is reflexive.  To show this, we 
 construct $\Xi_{n}$ explicitly with three $(n-1) \times n$ matrices, $\Xi_n'$, $\Xi_n''$, and $\Xi_n'''$.  
The $ij$-entries of $\Xi_n'$ are $-(i+2)(-1)^{i+j}{i \choose j-1}$ and the matrices $
\Xi_n''$ and $\Xi_n'''$ are all zero, with the exceptions that the first column of $\Xi_n''$ is 
$-2(-1)^i,$ and the bottom right-most entry
of $\Xi_n'''$ is $1-n.$ We consider $\Xi_n = \Xi_n' + \Xi_n'' + \Xi_n'''$. 

To calculate the product  $\Omega_n\Xi_n$, we separate both $\Omega_n$ and 
$\Xi_n$ into the sums above and then multiply them. The matrix multiplications are 
straightforward applications of elementary combinatorics, so we only record the 
results:
\begin{itemize}
\item[1)] The matrix $\Omega_n'\Xi_n'$ is the sum of two matrices: The only non-zero 
elements of the first one are the diagonal $ii$-entries $i(i+1)$ and the only non-zero 
elements of the second one are the first column $i1$-entries $-i(i+1).$
\item[2)] The matrix $\eta_{n} \mathbf{1}_{n}\Xi_n'$ is an all ones matrix, except 
for that the first column is constant $-2n+1$ and the last column is $n+1.$
\item[3)] The matrix $\Omega_n'\Xi_n''$ is an all zero matrix, except for that the first 
column $i1$-entry is $i(i+1)-2.$
\item[4)] The matrix $\eta_{n} \mathbf{1}_{n}\Xi_n''$ is an all zero matrix, except 
for that the first column is constant $2n-2.$
\item[5)] The matrix $\Omega_n'\Xi_n'''$ is an all zero matrix, except for that the 
rightmost bottom corner is $-n^2.$
\item[6)] The matrix $\eta_{n} \mathbf{1}_{n}\Xi_n'''$ is an all zero matrix, except 
for that the rightmost column is constant $-n.$
\end{itemize}
Summing up, we conclude that
the $ij$-entry of $\Omega_n\Xi_n=( \Omega'_n - \eta_{n} \mathbf{1}_{n})( \Xi_n' + 
\Xi_n'' + \Xi_n''')$ is
\[ -1 \textrm{ if }i\neq j,  \quad\quad i^2+i-1 \textrm{ if }i=j<n, \quad \quad n \textrm
{ if }i=j=n. \]
\end{proof}


\medskip

\begin{ack}
The authors thank Mats Boij, for several insightful conversations regarding this work, Benjamin Braun, for pointing out the connection to anti-lecture hall compositions, and the anonymous referee, for the helpful suggestions to improve this paper.  Alexander Engstr\"om thanks the Miller Institute for Basic Research at UC Berkeley for funding.  Matthew Stamps thanks the Mathematical Sciences Research Institute for support to attend the 2011 Summer Graduate Workshop on Commutative Algebra.
\end{ack}

\end{document}